\newtheorem{theorem}{Theorem}[section]
\newtheorem{lemma}{Lemma}[section]
\newtheorem{example}{Example}[section]
\newtheorem{Definition}{Definition}[section]
\newtheorem{corollary}{Corollary}[section]
\newproof{proof}{Proof}
\newproof{pot}{Proof of Theorem \ref{thm2}}
\newcommand{\bbb}{\begin{bmatrix}}
\newcommand{\eb}{\end{bmatrix}}
\begin{document}

\date{}
\begin{frontmatter}
\title{
A note on Sylvester-type equations }
\author{Matthew M. Lin \fnref{fn1}}
\ead{mhlin@ccu.edu.tw}
\address{Department of Mathematics, National Chung Cheng University, Chia-Yi 621, Taiwan.}
\author{Chun-Yueh Chiang\fnref{fn2}\corref{cor1}}
\ead{chiang@nfu.edu.tw}
\address{Center for General Education, National Formosa
University, Huwei 632, Taiwan.}
\cortext[cor1]{Corresponding author}
\fntext[fn1]{
The first author was supported in part by the National Center for Theoretical Sciences
of Taiwan and by the Ministry of Science and Technology of Taiwan under grant NSC
101-2115-M-194-007-MY3.
}
\fntext[fn2]{The second author was supported
in part by the National Center for Theoretical Sciences
of Taiwan and by
 the Ministry of Science and Technology of Taiwan under grant
NSC102-2115-M-150-002.}
\date{ }

\begin{abstract}
This work is to provide a comprehensive treatment of the relationship between the theory of the generalized (palindromic) eigenvalue problem and the theory of the Sylvester-type equations. Under a regularity assumption for a specific matrix pencil, we show that the solution of the $\star$-Sylvester matrix equation is uniquely determined and can be obtained by considering  its corresponding deflating subspace. We also propose an iterative method with quadratic convergence to compute the stabilizing  solution of the $\star$-Sylvester matrix equation via the well-developed palindromic doubling algorithm. We believe that our discussion is the first which implements the tactic of the deflating subspace for solving Sylvester equations and could give rise to the possibility of developing an advanced and effective solver for different types of matrix equations.
\end{abstract}


\bigskip

\begin{keyword}Palindromic eigenvalue problem; deflating subspaces; stabilizing solution; Sylvester matrix equations; algebraic matrix Riccati equations
\MSC 15A03 \sep 15A18 \sep 15A22 \sep 65F15
\end{keyword}

\end{frontmatter}

\section{Introduction}
The discussion of matrix eigenvalue problems in terms of deflating subspaces and the solutions of matrix equations has received a great deal of attention, particularly in its wide range of important applications in control theory~\cite{Gohberg1982,Mehrmann1991,Zhou1996}. It is famous that the solutions of Riccati and  matrix polynomial equations can be obtained by computing its corresponding deflating subspaces and invariant subspaces, respectively. For example, consider a matrix (unilateral) polynomial equation with degree $m$
\begin{align}\label{MM}
A_m X^m+A_{m-1} X^{m-1}+\cdots+A_0=0,
\end{align}
where the complex coefficient matrices $A_0,\cdots,A_m$ and unknown matrix $X$ are of size $n\times n$. Let the \emph{companion} matrix pencil $\mathcal{M}-\lambda \mathcal{L}\in \mathbb{C}^{mn \times mn} \times \mathbb{C}^{mn \times mn}$ be defined as
\begin{align}\label{MLL}
\mathcal{M}=
\bbb
     0 & I & 0 & \cdots & 0 \\
     0 & 0 & I &  & 0 \\
     \vdots & \vdots & \ddots & 0 &  \\
     0 & 0 & \cdots & 0 & I \\
     -A_0 & -A_1 & \cdots & -A_{m-2} & -A_{m-1} \\
\eb,\,\,
\mathcal{L}=
\bbb
I & 0 &\cdots&\cdots& 0 \\
0 & I & & &0\\
\vdots & &\ddots& & \\
0 & 0 &  & & A_m \\
\eb,
\end{align}
where $I$ and $0$ denote respecitively the identity and zero matrix with appropriate
size. It is easy to show
\begin{align}\label{ML}
\mathcal{M} U =\mathcal{L} UX,
\end{align}
with the ``column matrix'' $U=\bbb I \\ X\\\vdots\\ X^{m-1}\eb$. According to the equality~\eqref{ML}, the information of solution $X$ of ~\eqref{MM} is embedded in the eigeninformation of the generalized eigenvalue problem~\eqref{MLL}.
Our main point in this work is to connect deflating subspaces, or more specifically invariant subspaces, with solutions of the following linear matrix equations,
\begin{subequations}
\begin{itemize}
\item[1.] Standard Sylvester matrix equation:
\begin{align}\label{sylvester}
AX+XB=C,\quad A\in\mathbb{C}^{m \times m},B\in\mathbb{C}^{n \times n},C,X\in\mathbb{C}^{m \times n},
\end{align}
\item[2.] $\star$-Sylvester matrix equation:
\begin{align}\label{CTS}
AX+X^\star B=C,\quad A,B,C,X\in\mathbb{C}^{n \times n},
\end{align}
where $\star=\top$ is the transport operator or  $\star=H$ is the Hermitian operator.
\end{itemize}
\end{subequations}

The study of the Sylvester equations of the form~\eqref{sylvester}
has been widely discussed in theory and applications. Especially, ~\eqref{sylvester} plays an indispensable role in a variety of fields of control theory \cite{Lancaster1995}. On the other hand, the study of the $\star$-Sylvester equations stems from the treatment of completely integrable mechanical systems.
When $\star=\top$, a formula for the solution of \eqref{CTS} in terms of generalized inverse and some necessary and sufficient conditions for the existence of the solution of \eqref{CTS} were proposed in \cite{Piao2007}. In recent years, an extensive 
amount of iterative methods based on the conjugate gradient method were
studied and developed for solving the  generalized  $T$-Sylvester equation
\[ A X B + C X^T D = E \ , \ \ \ A,B,C,D,E,X \in \mathbb{R}^{n \times n}. \]
See, e.g., \cite{Su2010, Hajariann2013} and the references cited therein.
However, for the matrix equation~\eqref{CTS} especially with $\star = H$, there are not many references in the literature and in particular, the developed method so far for solving~\eqref{CTS} is through the application of the generalized Schur form of the pencil $A-\lambda B^\star$~\cite{Chiang2012}. In this paper, the solutions of the standard and $\star$-Sylvester equations are studied in terms of the study of invariant subspace and deflating subspace methods. In particular, we are mainly interested in the square cases when $m = n$ for $\star$-Sylvester equations. Other relative works can be found
in \cite{Hajariann2013,De2011}.

The paper is organized as follows. In Section 2 we recall some properties of the generalized eigenvalue problems. In Section 3 we give an invariant subspace method for computing the solution of the standard Sylvester equation~\eqref{sylvester}. In Section 4 we show how the deflating subspace method can be applied to solve {$\star$-Sylvester matrix equation}~\eqref{CTS}.  In Section 5 a quadratic convergence method is provided for finding the stabilizing solution of ~\eqref{CTS} and concluding remarks are given in Section 6.

\section{Preliminaries} \label{sec_background}
In this section we briefly review some properties of matrix pencil which are required in the statements and the proofs in the following sections. To facilitate our discussion, we use $\sigma(A)$ and $\sigma(A-\lambda B)$ to denote the spectrum of the matrix $A$ and the matrix pair $A-\lambda B$, respectively and the notion $\sim$ to denote the spectral equivalent condition, that is, $A-\lambda B \sim \widetilde{A} -\lambda \widetilde{B}$ implies that $\sigma(A-\lambda B) = \sigma(\widetilde{A} -\lambda \widetilde{B})$.

Given a matrix pencil $A-\lambda B$, the pair $A-\lambda B$ is said to be \emph{regular} if $\det(A-\lambda B) \neq 0$ for some $\lambda\in \mathbb{C}$. One strategy to analyze the eigeninformation is to transform one matrix pencil to its simplified and equivalent form. That is, two matrix pencils $A-\lambda B$ and $\widetilde{A} -\lambda \widetilde{B}$ are said to be equivalent if and only if there exist two nonsingular matrices $P$ and $Q$ such that
 \begin{equation*}
 P(A-\lambda B) Q = \widetilde{A} -\lambda \widetilde{B}.
 \end{equation*}

Like similarity discussed in the ordinary eigenvalue problem, the property of equivalence preserves eigenvalues and transforms eigenvectors in a similar way. This result can be easily understood through the following well-known result given by
Weierstrass~\cite{Weierstrass1868} and Kronecter~\cite{Kronecker1890} and is also discussed in~\cite[Defintion 1.13]{Stewart1990}.
\begin{theorem}\label{Weierstrass}
[Weierstrass canonical form for a matrix pencil]
Let $A-\lambda B$ be a regular pair. Then there exist nonsingular matrices $P$ and $Q$ such that
\begin{equation}~\label{wJform}
PAQ = \left[\begin{array}{cc} J & 0 \\0 & I\end{array}\right]  \mbox{ and }
PBQ = \left[\begin{array}{cc} I & 0 \\0 & N\end{array}\right],
\end{equation}
where $J$ and $N$ are in Jordan canonical form and diagonal entries of $N$ are zero (i.e., $N$ is nilpotent). Also, we can simplify this result by
the notation of direct sum. That is, $A-\lambda B \sim (J\oplus I)-\lambda (I \oplus N)$. Sometimes the canonical form in~\eqref{wJform} is also called the \emph{Kronecker canonical form}.
\end{theorem}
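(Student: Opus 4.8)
The plan is to reduce the generalized eigenvalue problem for the pencil $A-\lambda B$ to an ordinary eigenvalue problem by exploiting regularity, and then invoke the classical Jordan canonical form. Since $A-\lambda B$ is regular, $\det(A-\lambda B)$ is a polynomial in $\lambda$ that does not vanish identically, so there is a scalar $\mu\in\mathbb{C}$ with $A-\mu B$ nonsingular. Left-multiplying the pencil by $(A-\mu B)^{-1}$ and setting $C:=(A-\mu B)^{-1}B$, I would rewrite
\[
(A-\mu B)^{-1}(A-\lambda B)=I-(\lambda-\mu)C ,
\]
so that, up to the equivalence $P_1=(A-\mu B)^{-1}$, $Q_1=I$ and the substitution $\nu=\lambda-\mu$, the whole pencil is governed by the single matrix $C$.

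Next I would apply the Fitting (primary) decomposition to $C$: the space $\mathbb{C}^n$ splits as the direct sum of the generalized eigenspace of $C$ at the eigenvalue $0$ and the sum of the generalized eigenspaces at the nonzero eigenvalues. Concretely, there is a nonsingular $S$ with $S^{-1}CS=C_1\oplus C_0$, where $C_1$ is nonsingular and $C_0$ is nilpotent; by the ordinary Jordan canonical form I may further take $C_1$ and $C_0$ to be in Jordan form. This conjugation turns the pencil into the block-diagonal form $(I-\nu C_1)\oplus(I-\nu C_0)$.

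It then remains to normalize each block. For the first block I would factor $I-\nu C_1=C_1(C_1^{-1}-\nu I)$ and left-multiply by $C_1^{-1}$ to obtain the pencil $(C_1^{-1}+\mu I)-\lambda I$; since the inverse of a nonsingular Jordan matrix is similar to a Jordan matrix and a scalar shift only translates the spectrum, a further similarity brings this block to $J-\lambda I$ with $J$ in Jordan form. For the second block, $I-\nu C_0=(I+\mu C_0)-\lambda C_0$, where $I+\mu C_0$ is nonsingular because $C_0$ is nilpotent; left-multiplying by $(I+\mu C_0)^{-1}$ gives the pencil $I-\lambda N$ with $N:=(I+\mu C_0)^{-1}C_0$. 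As $N$ is a polynomial in $C_0$ it is again nilpotent, and one more similarity puts $N$ in Jordan form. Composing all the left multiplications into a single nonsingular $P$ and all the right multiplications (similarities) into a single nonsingular $Q$ yields $PAQ=J\oplus I$ and $PBQ=I\oplus N$, which is the claimed form, and the accompanying spectral equivalence $A-\lambda B\sim(J\oplus I)-\lambda(I\oplus N)$ follows immediately.

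The routine verifications — that $A-\mu B$ is invertible for some $\mu$, that the block similarities act independently on the two summands, and that nilpotency and invertibility are preserved under the manipulations above — are straightforward. The step carrying the real content is the Fitting decomposition of $C$, separating the invertible (finite-eigenvalue) part from the nilpotent (infinite-eigenvalue) part; this is precisely where the regularity hypothesis is used, through the existence of the shift $\mu$, and it is what makes the otherwise ordinary Jordan theory applicable to the pencil.
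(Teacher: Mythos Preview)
Your argument is correct and follows one of the standard routes to the Weierstrass form: shift to a point $\mu$ where the pencil is invertible, reduce to the single operator $C=(A-\mu B)^{-1}B$, split $C$ via its Fitting/Jordan decomposition into invertible and nilpotent parts, and then normalize each block separately. The computations you outline all check out; in particular $(A-\mu B)^{-1}(A-\lambda B)=I-(\lambda-\mu)C$ is correct, the identity block survives the final similarities on each summand unchanged, and $N=(I+\mu C_0)^{-1}C_0$ is indeed nilpotent because it is a polynomial in the nilpotent $C_0$.

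There is, however, nothing to compare against: the paper does not prove this theorem. It is quoted in Section~2 as background, with references to Weierstrass, Kronecker, and \cite{Stewart1990}, and is used only as a tool (e.g., in the proof of Lemma~4.1). So your proof is not an alternative to the paper's proof but rather a self-contained justification of a result the authors take as classical. If you want to keep it in, it is fine as written; the only cosmetic point is that ``Fitting decomposition'' is perhaps heavier terminology than needed---you could equally say you group the Jordan blocks of $C$ according to whether the eigenvalue is zero or nonzero.
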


From theorem~\ref{Weierstrass}, it is easy to see that if $\lambda$ is an eigenvalue of $A-\lambda B$ with eigenvector $\mathbf{x}$, then $\lambda$ is an eigenvalue of $PAQ - \lambda PBQ$ with eigenvector $Q^{-1}\mathbf{x}$.
Note that for an $n\times n$ matrix $A$, the generalized eigenvectors of $A$ span the entire $\mathbb{R}^n$ space. This property is also true for every regular matrix pencil and is demonstrated as follows. For a detailed proof, the reader is referred to~\cite[Theorem 7.3]{Gohberg1982}.

\begin{theorem}\label{lancaster}
Given a matrix pair of $n\times n$ matrix $A$ and $B$, if the matrix pencil $A-\lambda B$ is regular, then its Jordan chains corresponding to all finite and infinite eigenvalues carry the full spectral information about the matrix pencil
and consists of $n$ linearly independent vectors.
 \end{theorem}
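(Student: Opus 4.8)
The plan is to reduce everything to the Weierstrass canonical form of Theorem~\ref{Weierstrass} and then read off the Jordan chains block by block. First I would invoke Theorem~\ref{Weierstrass} (applicable since $A-\lambda B$ is regular) to obtain nonsingular $P,Q$ with $P(A-\lambda B)Q=(J\oplus I)-\lambda(I\oplus N)$, where $J$ is in Jordan form and $N$ is a nilpotent Jordan matrix. The key elementary observation I would record is that equivalence transports Jordan chains: if $\mathbf{y}_0,\dots,\mathbf{y}_{k-1}$ is a Jordan chain of $P(A-\lambda B)Q$ at a finite eigenvalue $\mu$, i.e. $(PAQ-\mu PBQ)\mathbf{y}_0=0$ and $(PAQ-\mu PBQ)\mathbf{y}_j=PBQ\,\mathbf{y}_{j-1}$, then $Q\mathbf{y}_0,\dots,Q\mathbf{y}_{k-1}$ is a Jordan chain of $A-\lambda B$ at $\mu$, and the analogous statement holds at the infinite eigenvalue with the roles of $A$ and $B$ interchanged. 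Since $Q$ is nonsingular it preserves linear independence and sends a spanning set to a spanning set, so it suffices to prove the claim for the canonical pencil.

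Next I would treat the canonical pencil block by block. Because it is block diagonal, $\mathbb{C}^n=\mathbb{C}^{n_1}\oplus\mathbb{C}^{n_2}$ splits accordingly and any Jordan chain lying in one summand stays there; hence it is enough to show that the $J-\lambda I$ block contributes $n_1$ independent chain vectors and the $I-\lambda N$ block contributes $n_2$. For $J-\lambda I$ this is exactly the classical fact that a matrix in Jordan canonical form has its generalized eigenvectors — the standard basis vectors, suitably grouped — forming Jordan chains that span $\mathbb{C}^{n_1}$; one only has to note that a finite-eigenvalue Jordan chain of the pencil $J-\lambda I$ is precisely a Jordan chain of the matrix $J$. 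For $I-\lambda N$, the infinite eigenvalue of the pencil corresponds, under the reversal $\lambda\mapsto\lambda^{-1}$, to the zero eigenvalue of $\lambda I-N$; since $N$ is already a nilpotent Jordan matrix, its generalized eigenvectors at $0$ (again the standard basis vectors) form Jordan chains spanning $\mathbb{C}^{n_2}$, and these are exactly the Jordan chains of the pencil at $\infty$.

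Finally I would assemble the pieces: the union of the chains from the two blocks gives $n_1+n_2=n$ linearly independent vectors spanning $\mathbb{C}^n$ for the canonical pencil, and applying $Q$ transports them to $n$ linearly independent spanning vectors for $A-\lambda B$. That these chains "carry the full spectral information" follows because equivalence preserves the eigenvalues together with their partial multiplicities (the Jordan block sizes), so the finite spectrum and its chain structure are read from $J$ and the infinite part from $N$, together exhausting $\sigma(A-\lambda B)$. I expect the only genuine subtlety to be the bookkeeping around the infinite eigenvalue — fixing the correct definition of a Jordan chain at $\infty$ for a pencil and verifying that the equivalence transformation and the block decomposition respect it; the finite part is entirely classical, and a complete argument can be found in~\cite[Theorem 7.3]{Gohberg1982}.
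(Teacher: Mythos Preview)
Your proposal is correct. Note, however, that the paper does not supply its own proof of this theorem: it states the result and refers the reader to \cite[Theorem~7.3]{Gohberg1982} for details. Your argument---reduce to the Weierstrass canonical form via Theorem~\ref{Weierstrass}, read off the Jordan chains from the $J-\lambda I$ and $I-\lambda N$ blocks using the classical Jordan theory for matrices, and transport everything back through the invertible $Q$---is exactly the standard route and is essentially what one finds in the cited reference, which you also invoke at the end.
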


\section{Standard Sylvester equations}
It is true that the standard Sylvester matrix equation~\eqref{sylvester} can be viewed as the application of Newton method to the nonsymmetric algebraic Riccati equation (NARE):
\begin{equation}\label{nare}
C+XA+DX-XBX = 0,
\end{equation}
where $X\in\mathbb{C}^{m\times n}$ is the unknown, and where the coefficients are $A\in\mathbb{C}^{n\times n}$, $B\in\mathbb{C}^{n\times m}$, $C\in\mathbb{C}^{m\times n}$, and $D\in\mathbb{C}^{m\times m}$,
and the solution $X$ of~\eqref{nare} can be solved by considering the invariant subspace of the Hamiltonian-like matrix~\cite{GuoXULin2006}
\begin{equation}
\mathcal{H} = \left[\begin{array}{cc}-A & B \\C & D\end{array}\right].
\end{equation}
But, unlike the NARE, the solvability of the standard Sylvester equation don't require rigorous constraints on the matrix $\mathcal{H}$ as given in~\cite{GuoXULin2006}. Instead, we want to show that the invariant subspace method for solving~\eqref{sylvester} can be developed by simply applying the following solvability condition of the Sylvester equation~\eqref{sylvester}~\cite{Horn1994}.
\begin{theorem}
The equation of~\eqref{sylvester} has a unique solution for each $C$ if and only if $\sigma(-A)\cap\sigma(B) = \phi$. 
\end{theorem}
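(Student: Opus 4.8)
The plan is to prove the equivalence by passing to a linear-algebraic reformulation of the Sylvester map. Define the linear operator $\mathcal{S}\colon \mathbb{C}^{m\times n}\to\mathbb{C}^{m\times n}$ by $\mathcal{S}(X) = AX+XB$. The statement ``$\eqref{sylvester}$ has a unique solution for every $C$'' is precisely the statement that $\mathcal{S}$ is invertible, which (since $\mathcal{S}$ acts on a finite-dimensional space) is equivalent to $\mathcal{S}$ being injective, i.e. $0\notin\sigma(\mathcal{S})$. So the whole proof reduces to computing the spectrum of $\mathcal{S}$ and showing $\sigma(\mathcal{S}) = \{\,\alpha+\beta : \alpha\in\sigma(A),\ \beta\in\sigma(B)\,\}$; then $0\notin\sigma(\mathcal{S})$ says no eigenvalue of $A$ is the negative of an eigenvalue of $B$, i.e. $\sigma(-A)\cap\sigma(B)=\phi$.

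For the forward direction (constructing eigenpairs of $\mathcal{S}$), I would first reduce $A$ and $B$ to upper-triangular (Schur) form: write $A = U T_A U^H$ and $B = V T_B V^H$ with $U,V$ unitary and $T_A,T_B$ upper triangular. Under the change of variables $X\mapsto U^H X \bar V$ (or the appropriate conjugation sending $AX+XB$ to $T_A Y + Y T_B$), the operator $\mathcal{S}$ is similar to $Y\mapsto T_A Y + Y T_B$, so it suffices to treat the triangular case. For triangular $T_A,T_B$ one can order a basis of $\mathbb{C}^{m\times n}$ (the matrix units $E_{ij}$, ordered so that the index pair advances appropriately) in which the matrix of $Y\mapsto T_AY+YT_B$ is itself upper triangular with diagonal entries exactly $(T_A)_{ii}+(T_B)_{jj}$; reading off the diagonal gives $\sigma(\mathcal{S})\subseteq\{\alpha+\beta\}$, and since the cardinalities match (counting multiplicity, $mn$ values on each side) we get equality. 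Alternatively, and more cleanly, I can invoke the Kronecker-product identity $\mathrm{vec}(AX+XB) = (I_n\otimes A + B^\top\otimes I_m)\,\mathrm{vec}(X)$ and quote the standard fact that $\sigma(I\otimes A + B^\top\otimes I) = \{\alpha+\beta\}$; then unique solvability for all $C$ $\iff$ $I_n\otimes A + B^\top\otimes I_m$ nonsingular $\iff$ $0\notin\sigma(A)+\sigma(B)\iff \sigma(-A)\cap\sigma(B)=\phi$.

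The genuinely substantive step is the spectral identity $\sigma(I\otimes A + B^\top\otimes I)=\sigma(A)+\sigma(B)$ (equivalently, the eigenvalue computation for the triangularized operator). One inclusion is easy: if $Au=\alpha u$ and $w^\top B=\beta w^\top$ (a left eigenvector of $B$, equivalently a right eigenvector of $B^\top$), then $X=uw^\top$ satisfies $AX+XB=(\alpha+\beta)X$, so every $\alpha+\beta$ is an eigenvalue of $\mathcal{S}$. The reverse inclusion — that there are no \emph{other} eigenvalues — is where the triangularization argument earns its keep: it is exactly the assertion that $I\otimes A+B^\top\otimes I$ has no eigenvalues beyond the $mn$ sums, which follows because simultaneous triangularization of $A$ and $B^\top$ triangularizes the Kronecker sum with those sums on the diagonal. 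I would present the Kronecker-sum route as the main line since it is shortest, with the Schur-form computation as the underlying justification of the spectral lemma. Once that lemma is in hand, the equivalence in the theorem is immediate from ``invertible $\iff$ injective $\iff$ $0$ not an eigenvalue.''
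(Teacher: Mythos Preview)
The paper does not actually prove this theorem; it states it with a citation to Horn and Johnson, \emph{Topics in Matrix Analysis}, and then uses it as a black-box ingredient for the invariant-subspace discussion that follows. Your proposal is correct and is precisely the classical Kronecker-sum / Schur-triangularization argument one finds in that reference, so there is no ``paper's proof'' to compare against---you have simply supplied what the paper omits by citation. (One cosmetic slip: the similarity sending $AX+XB$ to $T_AY+YT_B$ is $Y=U^{H}XV$, not $U^{H}X\bar V$; you already hedged this with ``or the appropriate conjugation,'' and it does not affect the argument.)
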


In this section, we want to establish the idea of invariant subspace. Its theory is an application of the following result~\cite{Chu1987}.
\begin {theorem}\label{Eric0}
Given two  regular matrix pencils $A_i-\lambda B_i\in\mathbb{C}^{n_i\times n_i}$, $1\leq i \leq 2$. Consider the following equations with respect to $U,V\in\mathbb{C}^{n_1\times n_2}$
\begin{subequations}\label{CM}
\begin{align}
A_1 U &= V A_2,\\
B_1 U &= V B_2.
\end{align}
\end{subequations}
If $\sigma(A_1-\lambda B_1)\cap\sigma(A_2-\lambda B_2)=\phi$, then~\eqref{CM} has a unique solution $U=V=0$.
\end {theorem}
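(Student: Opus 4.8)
The plan is to reduce both pencils to Weierstrass canonical form (Theorem \ref{Weierstrass}) and then exploit the classical fact that homogeneous Sylvester-type coupled equations have only the trivial solution when the spectra are disjoint. First I would apply Theorem \ref{Weierstrass} to each regular pencil: choose nonsingular $P_i,Q_i$ with $P_iA_iQ_i = J_i\oplus I$ and $P_iB_iQ_i = I\oplus N_i$, where $J_i$ collects the finite eigenvalues and $N_i$ is nilpotent (the infinite part). Substituting $U = Q_1\widetilde{U}Q_2^{-1}$ and $V = P_1^{-1}\widetilde{V}P_2^{-1}$ (left-multiply the first equation by $P_1$ and right-multiply by $Q_2$, similarly for the second), the system \eqref{CM} becomes the equivalent system $(J_1\oplus I)\widetilde{U} = \widetilde{V}(J_2\oplus I)$ and $(I\oplus N_1)\widetilde{U} = \widetilde{V}(I\oplus N_2)$, and the spectra are unchanged, so it suffices to prove the claim in canonical form.

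Next I would partition $\widetilde U$ and $\widetilde V$ conformally with the finite/infinite block structure on both sides, writing each as a $2\times 2$ array of blocks $U_{kl}, V_{kl}$ with $k,l\in\{f,\infty\}$. The two matrix equations then split into four pairs of block equations. For the $(f,f)$ block the pair reads $J_1 U_{ff} = V_{ff}J_2$ and $U_{ff} = V_{ff}$, hence $J_1 U_{ff} = U_{ff}J_2$; since $\sigma(J_1)\cap\sigma(J_2) = \varnothing$ (the finite spectra are disjoint by hypothesis), the standard Sylvester theorem forces $U_{ff} = V_{ff} = 0$. For the $(\infty,\infty)$ block the pair reads $U_{\infty\infty} = V_{\infty\infty}$ and $N_1 U_{\infty\infty} = V_{\infty\infty}N_2$, so $N_1 U_{\infty\infty} = U_{\infty\infty}N_2$; since $N_1,N_2$ are nilpotent, iterating gives $N_1^r U_{\infty\infty} = U_{\infty\infty}N_2^r = 0$ once $r$ exceeds the nilpotency index of $N_2$, and descending back down the powers of $N_1$ yields $U_{\infty\infty} = 0$ (equivalently, $N_1\otimes I - I\otimes N_2^\top$ is nilpotent hence here invertible on the relevant complement — more directly, $1\notin\sigma$ considerations show the only solution is trivial). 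The mixed blocks $(f,\infty)$ and $(\infty,f)$ are the easy cases: for $(f,\infty)$ one gets $J_1 U_{f\infty} = V_{f\infty}$ and $U_{f\infty} = V_{f\infty}N_2$, so $J_1 U_{f\infty}N_2 = V_{f\infty}N_2 = U_{f\infty}$, i.e. $U_{f\infty} = J_1 U_{f\infty} N_2$; iterating $k$ times gives $U_{f\infty} = J_1^k U_{f\infty} N_2^k$, which vanishes for $k$ at least the nilpotency index of $N_2$, and likewise $V_{f\infty}=0$. The $(\infty,f)$ block is symmetric, using nilpotency of $N_1$ and invertibility of $J_2$.

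The main obstacle is handling the infinite eigenvalues correctly: one must be careful that ``disjoint spectra'' includes the case where at most one of the pencils has an infinite eigenvalue, and the argument for the $(\infty,\infty)$ block is where a naive Sylvester-theorem citation does not directly apply, so the nilpotency-iteration argument (or, equivalently, passing to the reciprocal pencils $B_i - \mu A_i$ to swap the roles of finite and infinite) is the delicate point. Once the four block cases are disposed of, we conclude $\widetilde U = \widetilde V = 0$, hence $U = V = 0$, and uniqueness is immediate since \eqref{CM} is a homogeneous linear system in $(U,V)$ — if it has only the trivial solution, that solution is unique.
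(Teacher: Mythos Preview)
The paper does not actually prove Theorem~\ref{Eric0}; it is quoted as a known result from Chu~\cite{Chu1987} and used as a tool. So there is no ``paper's own proof'' to compare against, and your proposal stands on its own as a self-contained argument.

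Your overall strategy via the Weierstrass form and block decomposition is sound and is indeed the standard route. Two small points deserve cleaning up. First, a bookkeeping slip: with $P_iA_iQ_i$ and $P_iB_iQ_i$ in canonical form, the correct change of variables is $\widetilde U = Q_1^{-1}UQ_2$ and $\widetilde V = P_1 V P_2^{-1}$, i.e.\ $V = P_1^{-1}\widetilde V P_2$, not $V = P_1^{-1}\widetilde V P_2^{-1}$ as you wrote; this is cosmetic and does not affect the argument. Second, and more substantively, your treatment of the $(\infty,\infty)$ block is confused: the claim ``$N_1\otimes I - I\otimes N_2^\top$ is nilpotent hence \ldots invertible'' is self-contradictory, and in fact $N_1U=UN_2$ with both $N_i$ nilpotent does \emph{not} force $U=0$ (take $N_1=N_2$, $U=I$). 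The correct observation, which you do make at the end, is that the disjoint-spectrum hypothesis already forbids both pencils from having an infinite eigenvalue, so the $(\infty,\infty)$ block is vacuous and need not be argued at all. Simply delete that block from the case analysis and the proof is clean.
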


Note that Theorem~\ref{Eric0} yields a Corollary which is
simple, but useful in our subsequent discussion.
\begin{corollary}\label{Eric}
Let $A\in\mathbb{C}^{n\times n}$ and $T\in\mathbb{C}^{k\times k}$. If $\sigma(A)\cap\sigma(T)=\phi$, then the equation
\begin{align*}
AU=U T
\end{align*}
have the unique solution $U=0\in\mathbb{C}^{n\times k}$.
\end{corollary}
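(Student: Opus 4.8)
The plan is to obtain this as an immediate specialization of Theorem~\ref{Eric0}. First I would set $n_1 = n$, $n_2 = k$, and choose the pencils $A_1 - \lambda B_1 = A - \lambda I_n$ and $A_2 - \lambda B_2 = T - \lambda I_k$. Both of these are regular pencils: for instance $\det(A - \lambda I_n)$ is a degree-$n$ polynomial in $\lambda$, hence nonzero for all but finitely many $\lambda$, and similarly for $T - \lambda I_k$. Moreover $\sigma(A - \lambda I_n) = \sigma(A)$ and $\sigma(T - \lambda I_k) = \sigma(T)$, so the hypothesis $\sigma(A) \cap \sigma(T) = \phi$ is exactly the spectral disjointness condition required by Theorem~\ref{Eric0}.

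Next I would examine what the coupled system~\eqref{CM} becomes under these choices. Equation~(\ref{CM}b), namely $B_1 U = V B_2$, reads $I_n U = V I_k$, i.e. $U = V$. Substituting this into equation~(\ref{CM}a), $A_1 U = V A_2$, gives $A U = U T$. Conversely, any $U$ satisfying $AU = UT$ yields a solution $(U, V) = (U, U)$ of the full system~\eqref{CM}. Thus the solution set of $AU = UT$ is in bijection with the solution set of~\eqref{CM}.

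Finally, since all the hypotheses of Theorem~\ref{Eric0} are met, that theorem forces $U = V = 0$ to be the unique solution of~\eqref{CM}; in particular $U = 0 \in \mathbb{C}^{n \times k}$ is the only matrix with $AU = UT$. I do not anticipate any genuine obstacle here — the entire content is the correct identification of the pencils and the observation that the second equation of~\eqref{CM} collapses to $U = V$. The only point worth stating explicitly is the regularity of $A - \lambda I_n$ and $T - \lambda I_k$, which is automatic, so that Theorem~\ref{Eric0} is legitimately applicable.
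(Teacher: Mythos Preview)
Your proposal is correct and follows exactly the approach the paper intends: the corollary is stated immediately after Theorem~\ref{Eric0} as a ``simple, but useful'' consequence, and your specialization $B_1 = I_n$, $B_2 = I_k$ (forcing $U = V$ and reducing the system to $AU = UT$) is precisely the intended derivation. The paper does not spell out these details, so your write-up actually makes the deduction more explicit than the original.
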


Now we have enough tools for discussing the invariant subspace corresponding to~\eqref{sylvester}.

\begin {theorem}\label{thm2}
Let $A$, $B$, $C$ be the matrices given in~\eqref{sylvester}.  If $\mathcal{M}=\bbb -A & C\\0 & B \eb \in \mathbb{C}^{(m+n)\times (m+n)}$
and if $\sigma(-A)\cap \sigma(B)=\phi$, let us write
 \begin{align*}
\mathcal{M}\bbb U \\ V \eb= \bbb U\\V \eb T,
\end{align*}
where $\bbb U\\V \eb$ is full rank. Then, we have
\begin{enumerate}
\item $V=0$ and $-A U=U T$ if $\sigma (T)=\sigma(-A)$.
\item $V$ is nonsingular and $X=UV^{-1}$ is the solution  of~\eqref{sylvester} if $\sigma (T)=\sigma(B)$.
\end{enumerate}
\end {theorem}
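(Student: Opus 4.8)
The plan is to exploit the block upper-triangular structure of $\mathcal{M}$ together with the spectral separation hypothesis $\sigma(-A)\cap\sigma(B)=\phi$, and to split into the two cases according to what $\sigma(T)$ is. The fundamental observation is that the equation $\mathcal{M}\bbb U \\ V \eb = \bbb U \\ V \eb T$ unpacks, using $\mathcal{M}=\bbb -A & C \\ 0 & B \eb$, into the pair of matrix equations $-AU + CV = UT$ and $BV = VT$. Everything follows from manipulating these two identities.

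For part (1), assume $\sigma(T)=\sigma(-A)$. From $BV=VT$ and $\sigma(B)\cap\sigma(T)=\sigma(B)\cap\sigma(-A)=\phi$, Corollary~\ref{Eric} (applied with $A \leftarrow B$, $T \leftarrow T$, $U \leftarrow V$) forces $V=0$. Substituting back into $-AU+CV=UT$ gives $-AU=UT$ immediately. One should also note that $U$ cannot then be rank-deficient in a way that collapses the whole block: since $\bbb U \\ V \eb = \bbb U \\ 0 \eb$ is full rank by hypothesis and has $m+n$ rows but we need the column count to match; in fact the full-rank assumption together with $V=0$ forces $U$ to have full column rank, which is consistent with $T$ being $m\times m$ and $-AU=UT$ describing an $A$-invariant subspace. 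That is all part (1) requires.

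For part (2), assume $\sigma(T)=\sigma(B)$. The goal is to show $V$ is nonsingular (in particular square, so $T$ is $n\times n$) and that $X:=UV^{-1}$ solves $AX+XB=C$. The key step is to show $V$ is invertible. I would argue as follows: $\bbb U \\ V \eb$ is full rank; suppose $V$ is not of full column rank, so there is $\mathbf{z}\neq 0$ with $V\mathbf{z}=0$. Then from $-AU+CV=UT$, multiplying on the right by anything in the $T$-invariant structure generated by $\mathbf{z}$... more cleanly, consider $\ker V$ as a subspace; $BV=VT$ shows $\ker V$ is $T$-invariant (if $V\mathbf{z}=0$ then $VT\mathbf{z}=BV\mathbf{z}=0$), so $T$ restricted to $\ker V$ has an eigenvalue $\mu\in\sigma(T)=\sigma(B)$; but then picking an eigenvector $\mathbf{z}\in\ker V$ with $T\mathbf{z}=\mu\mathbf{z}$ and feeding it into $-AU+CV=UT$ yields $-AU\mathbf{z}=\mu U\mathbf{z}$, i.e. $-\mu\in\sigma(-A)$ unless $U\mathbf{z}=0$; since $\sigma(-A)\cap\sigma(B)=\phi$ and $\mu\in\sigma(B)$ we must have $-\mu\notin\sigma(-A)$, hence $U\mathbf{z}=0$, and then $\bbb U \\ V \eb\mathbf{z}=0$ contradicts full rank. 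Therefore $\ker V=\{0\}$, $V$ has full column rank; combined with the fact that $\bbb U\\V\eb$ has $m+n$ rows and its column count is at most... one then concludes $V$ is $n\times n$ and invertible (here one uses that $BV=VT$ with $V$ injective forces the column count of $V$ to be at most $n$, and the full-rank/invariant-subspace dimension count pins it at exactly $n$). Finally, with $V$ invertible, set $X=UV^{-1}$. From $BV=VT$ we get $T=V^{-1}BV$, and substituting into $-AU+CV=UT=UV^{-1}BV$ and right-multiplying by $V^{-1}$ gives $-AX+C=XB$, i.e. $AX+XB=C$, as desired.

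I expect the main obstacle to be the dimension bookkeeping in part (2): carefully establishing that $V$ is genuinely square and invertible (not merely injective) from the full-rank assumption on $\bbb U\\V\eb$ and the fact that $T$ carries the spectrum of $B$. The spectral-separation arguments themselves are routine invocations of Corollary~\ref{Eric} (or Theorem~\ref{Eric0}) once the block equations are written out; the delicate point is handling the possibility that $\bbb U\\V\eb$ has fewer than $m+n$ columns and ensuring the eigenvector-chasing argument on $\ker V$ is airtight. An alternative, perhaps cleaner, route for the invertibility is to use the Weierstrass/Kronecker form (Theorem~\ref{Weierstrass}) or Theorem~\ref{lancaster} to decompose the invariant subspaces of $\mathcal{M}$ according to the spectral split $\sigma(\mathcal{M})=\sigma(-A)\cup\sigma(B)$ directly, since $\mathcal{M}$ is block triangular; this would let one read off that the invariant subspace associated with $\sigma(B)$ has dimension exactly $n$ and is a graph over the second block coordinate, giving $V$ invertible for free.
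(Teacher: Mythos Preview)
Your treatment of part (1) coincides with the paper's: both extract $BV=VT$ from the block equations and apply Corollary~\ref{Eric} under the spectral disjointness $\sigma(B)\cap\sigma(T)=\sigma(B)\cap\sigma(-A)=\phi$ to force $V=0$.

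For part (2) the paper takes a genuinely different route to the invertibility of $V$, and it is exactly the ``alternative, perhaps cleaner'' one you sketch at the end. The paper does not argue on $\ker V$ at all; instead it juxtaposes the invariant subspace $\bbb U_1\\0\eb$ associated with $\sigma(-A)$ (obtained from part (1)) with the given subspace $\bbb U_2\\V_2\eb$ associated with $\sigma(B)$, and invokes Theorem~\ref{lancaster} (completeness of Jordan chains) to conclude that the combined matrix $\bbb U_1 & U_2\\ 0 & V_2\eb$ is nonsingular, whence $V_2$ is invertible directly. This global argument sidesteps all the dimension bookkeeping you flag as the main obstacle. Your kernel/eigenvector-chasing argument is more elementary in that it avoids Theorem~\ref{lancaster}, and it is essentially sound; note only a small sign slip---from $-AU\mathbf{z}=\mu U\mathbf{z}$ one obtains $\mu\in\sigma(-A)$, not $-\mu\in\sigma(-A)$---which does not affect the contradiction. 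Your residual worry about squareness of $V$ dissolves once one reads $\sigma(T)=\sigma(B)$ as equality counted with multiplicity (so $T$ is $n\times n$ a priori), which is the paper's implicit convention; under that reading, injectivity of $V$ already gives invertibility. The final substitution deriving $AX+XB=C$ from $-AU+CV=UT$ and $BV=VT$ is identical in both arguments.
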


\begin{proof}
Since $\sigma(-A)\cap \sigma(B)=\phi$, it follows from Corollay~\ref{Eric} that the equation $BV = -VA$ has only one solution $V = 0$. This proves part 1.


Consider two equations
\begin{align*}
\mathcal{M}\bbb U_1 \\ 0 \eb &= \bbb U_1\\0 \eb
T_1,\,
\mbox{with }
\sigma(T_1)=\sigma(-A),\\
\mathcal{M}\bbb U_2 \\ V_2 \eb &= \bbb U_2\\V_2 \eb T_2,\,
 \mbox{with }
\sigma(T_2)=\sigma(B).
\end{align*}
From Theorem~\ref{lancaster}, all column vectors of $\bbb U_1\\0 \eb$ and $\bbb U_2\\V_2 \eb$ are linearly independent. This implies that $\bbb U_1 & U_2\\0 & V_2 \eb$ is nonsingular, that is, $V_2$ is nonsingular.
Observe further that
 \begin{subequations}
\begin{align*}
-A U_2 + C V_2 &= U_2 T_2,\\
B V_2 &= V_2T_2.
\end{align*}
\end{subequations}
Upon substitution, we see that
$A  (U_2V_2^{-1}) + (U_2V_2^{-1}) B = C$
 and the proof is complete.

\end{proof}

From Theorem~\ref{thm2}, we know that, in order to compute the solution of~\eqref{sylvester}, it is sufficient to compute a base for the invariant subspace associated with the eigenvalues of $B$. Some acceleration iterative methods like doubling algorithm \cite{GuoXULin2006} for finding the unique solution of ~\eqref{sylvester} are based on the relationship between ~\eqref{sylvester} and invariant subspace in Theorem~\ref{thm2}. We don't further discuss here.

\section{$\star$-Sylvester equations}
Before demonstrating the unique solvability conditions~\eqref{CTS}, we need to define that a subset $\Lambda  = \{\lambda_1,\ldots,\lambda_n\}$ of complex numbers is said to be \emph{$\star$-reciprocal free} if and only if $\lambda_i\neq1/\lambda_j^\star$ for $1\leq i,j \leq n$. This definition also regards $0$ and $\infty$ as reciprocals of each other.
Note that the necessary and sufficient conditions for unique solvability
of~\eqref{CTS} are given in~\cite{Wimmer1994} by means of Roth's criterion.  Consult also~\cite[Lemma 5.10]{Byers2006} and~\cite[Lemma 8]{Kressner2009}, where
solvability conditions
for the $\star$-Sylvester equations with $m = n$ were obtained, without
considering the details of the solution process. That is, we have the following solvability conditions of~\eqref{CTS}.
\begin{theorem}\label{solvable}
Suppose that the pencil $A-\lambda B^\star$ is regular, the $\star$-Sylvester matrix equation~\eqref{CTS} is uniquely solvable if and only if
\begin{itemize}
\item[1.]
 For $\star = \top$, $\sigma(A^\top-\lambda B)\backslash \{1\}$ is reciprocal free, and whenever $1\in\sigma(A^\top-\lambda B)$, $1$ is simple;
\item[2.]
  For $\star = H$, $\sigma(A^H-\lambda B)$ is reciprocal free, and  $|\lambda| \neq 1$,  whence $\lambda\in\sigma(A^H-\lambda B)$.
 \end{itemize}
\end{theorem}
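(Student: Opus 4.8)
The plan is to reduce Theorem~\ref{solvable} to a spectral statement about a suitable matrix pencil by mimicking, for the $\star$-Sylvester equation, the companion-pencil construction of Section~1 and the invariant-subspace analysis of Section~3. Concretely, I would associate to~\eqref{CTS} the pencil $\mathcal{M}-\lambda\mathcal{L}$ with
\[
\mathcal{M}=\bbb A & C\\ 0 & -B^\star\eb,\qquad
\mathcal{L}=\bbb 0 & -B\\ I & A^\star\eb,
\]
(or a variant tailored so that the $\star$-operator interchanges the two block rows), and observe that if $X$ solves~\eqref{CTS} then $\bbb I\\ X\eb$ and its $\star$-conjugate span a deflating subspace. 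The regularity hypothesis on $A-\lambda B^\star$ is exactly what makes this pencil regular, so that Theorem~\ref{Weierstrass} and Theorem~\ref{lancaster} apply. The key structural feature is that this pencil is \emph{palindromic up to the $\star$-operation}: its spectrum is symmetric under $\lambda\mapsto 1/\lambda^\star$, and the eigenvalues accounted for by $A^\star-\lambda B$ appear paired with their $\star$-reciprocals. The condition ``$\sigma(A^\top-\lambda B)$ reciprocal free away from $1$, with $1$ simple'' (resp.\ ``$\sigma(A^H-\lambda B)$ reciprocal free and off the unit circle'') is then precisely the condition that lets us split the $2n$ Jordan chains into two complementary halves of size $n$, one of which gives a graph subspace $\bbb I\\ X\eb$.

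The steps, in order, would be: (i) verify the pencil identity, i.e.\ that $X$ solves~\eqref{CTS} iff $\mathcal{M}\bbb I\\ X\eb=\mathcal{L}\bbb I\\ X\eb Y$ for some $Y$ encoding the ``$B$-part'' of the spectrum, and conversely that a full-rank deflating subspace whose eigenvalues avoid their own $\star$-reciprocals produces, via Theorem~\ref{lancaster}, a nonsingular lower block and hence a solution $X=UV^{-1}$ — this is the $\star$-analogue of Theorem~\ref{thm2}. (ii) For \emph{uniqueness}: if $X_1,X_2$ both solve~\eqref{CTS}, then $Z=X_1-X_2$ satisfies $AZ+Z^\star B=0$; applying $\star$ gives $Z^\star A^\star + B^\star Z=0$, and eliminating $Z^\star$ yields $A Z = Z B^\star{}^{-1}\cdots$ — more cleanly, one gets a homogeneous Sylvester-type system $A_1 U = VA_2$, $B_1U=VB_2$ in the sense of Theorem~\ref{Eric0} with the two pencils being $A-\lambda B^\star$ and (essentially) $B^{-\star}$-conjugated $A^\star-\lambda B$, whose spectra are disjoint exactly under the reciprocal-free hypothesis; Theorem~\ref{Eric0} then forces $Z=0$. (iii) For \emph{existence}: split the $2n$-dimensional space of Jordan chains of $\mathcal{M}-\lambda\mathcal{L}$ into the part corresponding to $\sigma(A^\star-\lambda B)$ and its $\star$-reciprocal part; the reciprocal-free/unit-circle hypotheses guarantee these two parts have dimension $n$ each and are complementary, and the top-block of one part is invertible (again by Theorem~\ref{lancaster}), delivering $X$. (iv) Handle the exceptional eigenvalues: $\lambda=1$ for $\star=\top$ (self-$\star$-reciprocal, hence must be simple so its single chain can be assigned to exactly one half), and $\lambda$ on the unit circle for $\star=H$ (which must be absent altogether, since a unit-modulus eigenvalue is its own $\star$-reciprocal and would obstruct the clean $n+n$ split).

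The main obstacle I expect is step (iv) together with the bookkeeping in step (iii): showing that the $\star$-structure of the pencil really does pair each finite nonzero eigenvalue $\lambda$ with $1/\lambda^\star$ \emph{with matching Jordan block sizes}, and that the self-reciprocal eigenvalues ($\lambda=1$ for transpose; $|\lambda|=1$ for Hermitian; and the $0/\infty$ pair) are the only source of failure — this is where the precise difference between the two cases of the theorem lives, and where one must invoke the known palindromic Kronecker structure (cf.\ the references~\cite{Byers2006,Kressner2009} cited just before the statement) rather than re-deriving it. The regularity assumption on $A-\lambda B^\star$ is used throughout to guarantee a full set of $2n$ independent Jordan chains; without it the dimension count in (iii) collapses. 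Once the $n+n$ splitting is justified, the passage from deflating subspace to solution $X=UV^{-1}$, and the verification that $X$ indeed satisfies~\eqref{CTS}, are routine block computations of the same flavor as the proof of Theorem~\ref{thm2}.
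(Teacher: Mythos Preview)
The paper does not actually prove Theorem~\ref{solvable}. It is stated as a known result, with the sentence immediately preceding it attributing the necessary and sufficient conditions to \cite{Wimmer1994} (via Roth's criterion) and to \cite[Lemma~5.10]{Byers2006} and \cite[Lemma~8]{Kressner2009}. So there is no ``paper's own proof'' to compare your proposal against; the deflating-subspace machinery in Section~4 (Lemma~\ref{LemmAB}, Theorem~\ref{deflatingthm}, Corollary~\ref{CTSsol}) is developed \emph{assuming} Theorem~\ref{solvable}, not in order to prove it.

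Your plan is nonetheless a reasonable sketch of how one might prove the result, and it is broadly consonant with the palindromic-pencil viewpoint the paper adopts --- indeed the pencil you should use is the paper's $\mathcal{Z}^\star-\lambda\mathcal{Z}$ of~\eqref{eq:Zmat}, not the ad hoc $\mathcal{M}-\lambda\mathcal{L}$ you wrote down (your block identities do not quite close up; the paper's do, as in the proof of Theorem~\ref{deflatingthm}). Two genuine gaps remain. First, your uniqueness step~(ii) invokes Theorem~\ref{Eric0} on the homogeneous equation $AZ+Z^\star B=0$, but in the transpose case with $1\in\sigma(A^\top-\lambda B)$ the two pencils $A-\lambda B^\star$ and $B-\lambda A^\star$ share the eigenvalue~$1$, so Theorem~\ref{Eric0} does not apply; you need an extra argument exploiting the coupling $V=-U^\star$ between the two unknowns, and this is exactly where the ``$1$ simple'' hypothesis enters. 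Second, you yourself concede that step~(iv) --- matching Jordan structures across the $\lambda\leftrightarrow 1/\lambda^\star$ pairing and handling the self-reciprocal eigenvalues --- requires the palindromic Kronecker theory from the very references the paper cites. At that point you are, like the paper, importing the result rather than proving it.
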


 The most straightforward way to find the solution of~\eqref{CTS}  is through the analysis of its corresponding \emph{palindromic} eigenvalue problem \cite{Chu2010,NLA:NLA612,Mackey2006,doi:10.1137/050628350}
\begin{equation}\label{eq:PEP}
\mathcal{Q}(\lambda)x: = (\mathcal{Z}^\star-\lambda \mathcal{Z})x=0,
\end{equation}
where
\begin{equation}\label{eq:Zmat}
\mathcal{Z} =\bbb 0 & B \\ A & -C \eb\in\mathbb{C}^{2n\times 2n},
\end{equation}
and the discussion of the deflating subspace of~\eqref{eq:PEP}. Again, the operator $(\cdot)^\star$ denotes either the transpose ($\top$), or the conjugate transpose ($H$) of a matrix.  We interpret  both cases in a unified way hereafter. The name ``palindromic'' stems from the invariant property of $\mathcal{Q(\lambda)}$
under reversing the order and taking (conjugate) transpose of the coefficient matrices, that is,
\begin{equation*}
\mbox{rev} \mathcal{Q}(\lambda) = \mathcal{Q}^\star(\lambda),\,
\mbox{where } \mbox{rev} \mathcal{Q}(\lambda) : =
(\mathcal{Z}-\lambda \mathcal{Z}^\star).
\end{equation*}

The corresponding eigenvalue problems of~\eqref{eq:PEP} were originally generated from  the vibrational analysis of trail tracks for obtaining information on reducing noise between wheel and rail~\cite{Ipsen04,Hilliges2004}.
Our task in this section is to identify eigenvectors of problem~\eqref{eq:PEP} and then associate these eigenvectors
with the solution of~\eqref{CTS}.
We begin this analyst by studying the eigeninformation of two matrices $A$ and $B$, where $A-\lambda B$ is a regular matrix pencil.
\begin{lemma}\label{LemmAB}
Let $A-\lambda B\in\mathbb{C}^{n\times n}$ be a regular matrix pencil. Assume that matrices $X_i,Y_i\in\mathbb{C}^{n\times n_i}$, $i=1,2$, are full rank and satisfy the following equations
\begin{subequations}\label{eq:abxy}
\begin{eqnarray}
A X_i &=& Y_i R_i,\\
B X_i &=& Y_i S_i,
\end{eqnarray}
\end{subequations}
where $R_i$ and $S_i$, $i=1,2$, are square matrices of size $n_i\times n_i$.
Then
\begin{itemize}
\item [i)] $R_i-\lambda S_i\in\mathbb{C}^{n_i\times n_i}$ are regular matrix pencils for $i=1,2$.

\item [ii)] if $\sigma(R_1-\lambda S_1)\cap \sigma(R_2-\lambda S_2)=\phi$, then the matrix
$\bbb X_1 & X_2\eb\in\mathbb{C}^{n\times (n_1+n_2)}$  is full rank.
\end{itemize}
\end{lemma}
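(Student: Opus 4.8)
The plan is to treat the two parts separately, with part (i) serving as a warm-up and a tool for part (ii). For part (i), I would argue by contradiction: suppose $R_i - \lambda S_i$ is singular for some index $i$, so there exists a nonzero vector $v$ with $R_i v = S_i v = 0$ (a common null vector, which is exactly what singularity of a square pencil means when one uses the determinant characterization, or one can phrase it via the fact that a singular square pencil has $R_i v=0=S_i v$ for some $v\ne 0$ after reduction). Then from the equations $AX_i = Y_iR_i$ and $BX_i = Y_iS_i$ we get $A(X_i v) = Y_i R_i v = 0$ and $B(X_i v) = Y_i S_i v = 0$. Since $X_i$ is full rank, $X_i v \neq 0$, so $X_i v$ is a common null vector of $A$ and $B$, i.e. $\det(A - \lambda B) \equiv 0$, contradicting regularity of $A-\lambda B$. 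Hence each $R_i - \lambda S_i$ is regular.

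For part (ii), suppose $\bbb X_1 & X_2 \eb$ is not full rank. Then there is a nonzero vector $\bbb v_1 \\ v_2 \eb$ (partitioned compatibly with $n_1, n_2$) such that $X_1 v_1 + X_2 v_2 = 0$. Since $X_1$ and $X_2$ are individually full rank, both $v_1$ and $v_2$ must be nonzero (if, say, $v_2 = 0$ then $X_1 v_1 = 0$ forces $v_1 = 0$). Set $w := X_1 v_1 = -X_2 v_2 \neq 0$. Now I want to extract from $v_1$ a genuine eigen-direction of the pencil $R_1 - \lambda S_1$; this is where part (i) and the regularity/Jordan-chain structure (Theorem~\ref{lancaster} applied to $R_i-\lambda S_i$) come in. The cleanest route is: apply the Weierstrass canonical form (Theorem~\ref{Weierstrass}) to $R_1 - \lambda S_1$, so $w$ lives in the span of columns of $X_1$ and, pushing $v_1$ through the transformation, one can reduce to the case where $v_1$ is a true eigenvector, i.e. $R_1 v_1 = \mu S_1 v_1$ for some (finite or infinite) eigenvalue $\mu \in \sigma(R_1 - \lambda S_1)$ — handling the infinite eigenvalue as the case $S_1 v_1 = 0$, $R_1 v_1 \ne 0$. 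Then $A w = A X_1 v_1 = Y_1 R_1 v_1 = \mu\, Y_1 S_1 v_1 = \mu\, B X_1 v_1 = \mu\, B w$, so $(\mu, w)$ is an eigenpair of $A - \lambda B$ with $\mu \in \sigma(R_1 - \lambda S_1)$. Repeating the identical argument with $w = -X_2 v_2$ and the pencil $R_2 - \lambda S_2$ produces an eigenvalue $\nu \in \sigma(R_2 - \lambda S_2)$ with $(\nu, w)$ an eigenpair of $A-\lambda B$ for the \emph{same} eigenvector $w$. Since $A - \lambda B$ is regular, an eigenvector determines its eigenvalue uniquely, forcing $\mu = \nu$, hence $\sigma(R_1 - \lambda S_1) \cap \sigma(R_2 - \lambda S_2) \neq \phi$, contradicting the hypothesis.

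The main obstacle I anticipate is the bookkeeping in the step that promotes an arbitrary null vector $v_1$ of the combined matrix to an actual eigenvector of $R_1 - \lambda S_1$, together with the uniform treatment of the infinite eigenvalue. One must be careful that the vector $w = X_1 v_1$ stays nonzero after the reduction and that "same $w$, two eigenvalues, regular pencil $\Rightarrow$ equal eigenvalues" is invoked correctly (including the finite-versus-infinite dichotomy: $w$ cannot be simultaneously a finite eigenvector via one pencil and an infinite eigenvector via the other, since $Bw \ne 0$ in the finite case and $Bw = 0$ in the infinite case). A slicker alternative that sidesteps the canonical-form bookkeeping is to invoke Corollary~\ref{Eric}/Theorem~\ref{Eric0} directly: the disjointness of $\sigma(R_1-\lambda S_1)$ and $\sigma(R_2-\lambda S_2)$ makes a suitable coupled Sylvester-type system have only the trivial solution, and a dependency $X_1 v_1 + X_2 v_2 = 0$ can be packaged into such a system; I would try that formulation first and fall back on the explicit eigenvector argument only if the packaging is not clean.
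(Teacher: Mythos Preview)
Your proposal has a genuine gap in each part.

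\textbf{Part (i).} The claim that a singular square pencil $R_i-\lambda S_i$ must admit a common null vector $v$ with $R_iv=S_iv=0$ is false. A concrete counterexample in $\mathbb{C}^{3\times 3}$ is
\[
R=\begin{pmatrix}0&1&0\\0&0&0\\0&0&1\end{pmatrix},\qquad
S=\begin{pmatrix}1&0&0\\0&0&1\\0&0&0\end{pmatrix},
\]
for which $\det(R-\lambda S)\equiv 0$ while $\ker R\cap\ker S=\{0\}$ (and likewise on the left). Singular square pencils in general only admit a nonzero \emph{polynomial} vector $p(\lambda)$ annihilated by $R-\lambda S$, not a constant one. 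The correct argument, and the one the paper uses, is the contrapositive via inclusion of spectra: for any $\lambda$ with $(R_i-\lambda S_i)x=0$ and $x\neq 0$, one has $(A-\lambda B)X_ix = Y_i(R_i-\lambda S_i)x=0$ with $X_ix\neq 0$, so $\lambda\in\sigma(A-\lambda B)$. Were $R_i-\lambda S_i$ singular this would force every $\lambda\in\mathbb{C}$ to be an eigenvalue of $A-\lambda B$, contradicting regularity.

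\textbf{Part (ii).} The step ``reduce to the case where $v_1$ is a true eigenvector of $R_1-\lambda S_1$'' does not work: changing $v_1$ changes $w=X_1v_1$, and then $w$ is no longer equal to $-X_2v_2$, so the two-sided eigenvector argument collapses. Put differently, a nontrivial relation $X_1v_1+X_2v_2=0$ gives you one vector $w$ lying in $\mathrm{Im}\,X_1\cap\mathrm{Im}\,X_2$, but there is no reason this $w$ should be an eigenvector of $A-\lambda B$ for \emph{any} eigenvalue. Your fallback via Theorem~\ref{Eric0} is not obviously packageable either: from $X_1v_1=-X_2v_2$ you get relations among $Y_1R_1v_1,Y_2R_2v_2,Y_1S_1v_1,Y_2S_2v_2$, but not equations of the shape $R_1U=VR_2$, $S_1U=VS_2$ that the theorem requires.

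What the paper does instead is to pass to the Weierstrass form of each $R_i-\lambda S_i$, thereby exhibiting the columns of (transforms of) $X_1$ and $X_2$ as Jordan chains of the regular pencil $A-\lambda B$ attached to disjoint sets of eigenvalues; linear independence then follows from the standard fact (Theorem~\ref{lancaster}) that Jordan chains of a regular pencil for distinct eigenvalues are linearly independent. The essential point you are missing is that one must work with the \emph{whole} column spaces of $X_1,X_2$ as invariant/deflating subspaces for disjoint spectral sets, rather than trying to extract a single shared eigenvector from one accidental intersection vector.
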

\begin{proof}
Fix $i\in \{1,2\}$. To show that $R_i-\lambda S_i$ is a regular pencil, it suffices to show that if $\lambda$ is an eigenvalue of $R_i-\lambda S_i$, then $\lambda$ is an eigenvalue of $A-\lambda B$.
Let $\mathbf{x} \neq 0$ be an eigenvector of $R_i-\lambda S_i$ corresponding to the eigenvalue $\lambda$, that is,
\begin{align}\label{eq:rs}
\begin{array}{rc}
(R_i - \lambda S_i) \mathbf{x} = 0, & \mbox{if }\lambda < \infty,\\
(S_i  - 0 R_i) \mathbf{x} = 0, & \mbox{if }\lambda = \infty.
\end{array}
\end{align}
We then pre-multiply both sides of~\eqref{eq:rs} by $X_i$ and
obtain
\begin{align*}
\begin{array}{rc}
(A - \lambda B) X_i\mathbf{x} = 0, & \mbox{if }\lambda < \infty,\\
(B  - 0 A) X_i\mathbf{x} = 0, & \mbox{if }\lambda = \infty.
\end{array}
\end{align*}
Since $X_i$ is full rank and $\mathbf{x}\neq 0$, we have $X_i \mathbf{x}\neq 0$. Hence, $\lambda$ is an eigenvalue of $(A, B)$. This proves part i).

Next, by Theorem~\ref{Weierstrass} there exist nonsingular matrices $P_i,\,Q_i$, Jordan block matrices $J_i\in\mathbb{C}^{k_i\times k_i}$, and nilpotent matrices $N_i$, $i=1,2$ (exactly one of  $N_1$ or $N_2$ exists since the regularity of $R_i-\lambda S_i$) such that
\begin{subequations}\label{eq:pq}
\begin{eqnarray}
(P_1R_1Q_1,  P_1S_1Q_1)&=&(\bbb J_1 & \\ & I  \eb, \bbb I & \\ & N_1  \eb),\\
(P_2R_2Q_2, P_2 S_2Q_2)&=& (\bbb J_2 & \\ & I  \eb,  \bbb I & \\ & N_2  \eb).
\end{eqnarray}
\end{subequations}
By~\eqref{eq:abxy} and~\eqref{eq:pq}, it can be seen that
 \begin{align*}
A  X_{i} Q_i  &= Y_i P_i^{-1}  \bbb J_i  & 0\\ 0  & I \eb,\\
B  X_{i} Q_i  &= Y_i P_i^{-1}  \bbb I & 0 \\ 0 & N_i \eb.
\end{align*}
Let $X_iQ_i=\bbb X_{i,1} & X_{i,2} \eb$ and  $Y_iP_i^{-1}=\bbb Y_{i,1} & Y_{i,2} \eb$ be two partitioned matrices with size $n_i \times (k_i+(n_i-k_i)) $ for $i=1,2$. It then follows from direct computation that the matrix pair $A-\lambda B$ satisfies
\begin{align*}
A \bbb X_{1,1} & X_{2,1} \eb &= B  \bbb X_{1,1} & X_{2,1} \eb \bbb J_1  & 0\\ 0  & J_2 \eb,\\
B \bbb X_{1,2} & X_{2,2} \eb &= A  \bbb X_{1,2} & X_{2,2} \eb \bbb N_1 & 0 \\ 0 & N_2 \eb.
\end{align*}

Since $\sigma(R_1-\lambda S_1)\cap \sigma(R_2-\lambda S_2)=\phi$,
we might assume without loss of generality that $N_2$ does not exist, i.e., $(P_2R_2Q_2, P_2 S_2Q_2)=  (J_2,  I )$. Since the condition $\sigma(R_1-\lambda S_1)\cap \sigma(R_2-\lambda S_2)=\phi$ holds, it then follows from Theorem~\ref{lancaster} that the matrix $\bbb X_1 & X_2\eb=\bbb X_{1,1}&X_{2,1}&X_{1,2}\eb$ is full rank.

\end{proof}

Armed with the property given in Lemma~\ref{LemmAB}, we can now attack the problem of determine how the deflating subspace is related to the solution of ~\eqref{CTS}.
\begin {theorem}\label{deflatingthm}
Corresponding to~\eqref{CTS}, let $\mathcal{Z}$ be a matrix defined by~\eqref{eq:Zmat}.
If $\sigma(A^\star-\lambda B)$ is reciprocal free, and $U_i,\,V_i\in\mathbb{C}^{n\times n}$, $i = 1,2$, are matrices satisfying
\begin{subequations}\label{eq:cond1}
\begin{align}
\mathcal{Z}^\star \bbb U_1 \\ V_1 \eb= \bbb U_2 \\ V_2 \eb T_1^\star,\label{d1}\\
\mathcal{Z} \bbb U_1 \\ V_1 \eb= \bbb U_2 \\ V_2 \eb T_2,\label{d2}
\end{align}
\end{subequations}
for some matrices $T_1, T_2 \in \mathbb{C}^{n\times n}$.
%
 Then,
\begin{itemize}
\item [i)] $V_1=U_2=0$  if $T_1^\star-\lambda T_2\sim B^\star-\lambda A$;
\item [ii)]
$V_1$ is nonsingular  if  $T_1^\star-\lambda T_2\sim A^\star-\lambda B$. Moreover, $X=U_1V_1^{-1}=-U_2^{-\star} V_2^\star$ solves the $\star$-Sylvester matrix equation~\eqref{CTS} if $B$ is a nonsingular.
\end{itemize}
\end {theorem}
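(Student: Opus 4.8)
The plan is to mimic the structure of the proof of Theorem~\ref{thm2}, replacing the single matrix equation $\mathcal{M}U = UT$ by the pair of palindromic relations~\eqref{eq:cond1}, and using Lemma~\ref{LemmAB} in place of Theorem~\ref{lancaster} to extract the full-rank conclusions. First I would write out what~\eqref{d1} and~\eqref{d2} say entrywise, using $\mathcal{Z} = \bbb 0 & B \\ A & -C\eb$ and $\mathcal{Z}^\star = \bbb 0 & A^\star \\ B^\star & -C^\star\eb$. From~\eqref{d2} one gets $BV_1 = U_2 T_2$ and $AU_1 - CV_1 = V_2 T_2$; from~\eqref{d1} one gets $A^\star V_1 = U_2 T_1^\star$ and $B^\star U_1 - C^\star V_1 = V_2 T_1^\star$. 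The two equations in the pair $(U_1,U_2)$, namely $B^\star U_1 = \text{(stuff)}$ and $A U_1 = \text{(stuff)}$, and the two equations in the pair $(V_1, V_2)$, namely $B V_1 = U_2 T_2$ and $A^\star V_1 = U_2 T_1^\star$, are precisely of the form~\eqref{eq:abxy} in Lemma~\ref{LemmAB} once I identify the correct pencil: $V_1$ satisfies $A^\star V_1 = U_2 T_1^\star$ and $B V_1 = U_2 T_2$, i.e.\ it is a solution pair for the pencil $A^\star - \lambda B$ with "eigenpencil" $T_1^\star - \lambda T_2$; similarly $U_1$ satisfies $B^\star U_1 = V_2 T_1^\star$ and $A U_1 = V_2 T_2$ (after substituting for $CV_1$-type terms), so $U_1$ is a solution pair for $B^\star - \lambda A$ with eigenpencil $T_1^\star - \lambda T_2$ — wait, I must be careful here, and this bookkeeping is exactly where the main obstacle lies (see below).

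For part~(i), assume $T_1^\star - \lambda T_2 \sim B^\star - \lambda A$. I want to show $V_1 = 0$ and $U_2 = 0$. The idea: combine the blocks so that $V_1$ (or $U_2$) satisfies a homogeneous Sylvester-type relation $P W = W Q$ with $\sigma(P) \cap \sigma(Q) = \phi$, then invoke Corollary~\ref{Eric}. Concretely, from $A^\star V_1 = U_2 T_1^\star$ and $B V_1 = U_2 T_2$ one can, using that $\mathcal{Z}^\star - \lambda\mathcal{Z}$ is regular (which follows from regularity of $A - \lambda B^\star$, cf.\ Theorem~\ref{solvable}) and that $B$ is handled as in the statement, eliminate $U_2$ to produce a pencil identity forcing $\sigma$ of the relevant operators to be disjoint: the spectrum attached to $(U_1,V_1)$ through $T_1^\star - \lambda T_2$ is $\sigma(B^\star - \lambda A)$, while the "other half" of the deflating subspace of $\mathcal{Z}^\star - \lambda\mathcal{Z}$ carries $\sigma(A^\star - \lambda B)$; the reciprocal-free hypothesis on $\sigma(A^\star - \lambda B)$ is exactly what guarantees these two sets are disjoint (note $\sigma(B^\star-\lambda A)$ is the reciprocal of $\sigma(A^\star - \lambda B)$). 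Then Corollary~\ref{Eric} (or Theorem~\ref{Eric0}) kills the cross terms, giving $V_1 = 0$; feeding $V_1 = 0$ back into $A^\star V_1 = U_2 T_1^\star$ with $T_1^\star$ invertible (since $0 \notin \sigma(B^\star-\lambda A)$, equivalently $\infty \notin \sigma(A^\star-\lambda B)$, by reciprocal-freeness) yields $U_2 = 0$.

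For part~(ii), assume instead $T_1^\star - \lambda T_2 \sim A^\star - \lambda B$. Now I run Lemma~\ref{LemmAB}(ii): I take the solution pair $(U_1, V_1)$ (or the relevant block) associated with eigenpencil $\sim A^\star - \lambda B$ together with a companion solution pair associated with eigenpencil $\sim B^\star - \lambda A$ — these exist because, by regularity of $\mathcal{Z}^\star - \lambda\mathcal{Z}$ and Theorem~\ref{lancaster}, the Jordan chains of the full pencil decompose into these two spectrally-disjoint halves — and conclude that the stacked matrix is full rank, hence $V_1$ is nonsingular (the $V_1$-block is the part of a full-rank $n\times n$ matrix that must itself be invertible, arguing as in Theorem~\ref{thm2}). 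Then set $X = U_1 V_1^{-1}$: from $BV_1 = U_2 T_2$ and $A^\star V_1 = U_2 T_1^\star$ and the two $U_1$-equations, substitute $T_2 = V_1^{-1} B^{-1}\cdots$ — more cleanly, take~\eqref{d2}, namely $AU_1 - CV_1 = V_2 T_2$ and $BV_1 = U_2 T_2$, and~\eqref{d1}, namely $B^\star U_1 - C^\star V_1 = V_2 T_1^\star$ and $A^\star V_1 = U_2 T_1^\star$; eliminate $T_1, T_2, U_2, V_2$ to land on $AX + X^\star B = C$. The identity $X = U_1 V_1^{-1} = -U_2^{-\star}V_2^\star$ comes from cross-multiplying the two representations of $T_1^\star$ (resp.\ $T_2$) and using invertibility of $B$; showing $U_2$ is invertible is the analogue of showing $V_1$ is, done by the same full-rank argument applied to the reciprocal half.

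The hard part will be the careful bookkeeping of which pencil ($A^\star - \lambda B$ versus $B^\star - \lambda A$) is attached to which block under the palindromic structure, and verifying that "reciprocal free" for $\sigma(A^\star - \lambda B)$ translates precisely into the spectral-disjointness hypothesis $\sigma(R_1 - \lambda S_1) \cap \sigma(R_2 - \lambda S_2) = \phi$ needed to apply Lemma~\ref{LemmAB}(ii) and Corollary~\ref{Eric} — in particular handling the boundary values $0$ and $\infty$ (which reciprocal-freeness excludes) so that the matrices $T_1^\star$, $T_2$, $B$ inverted along the way are genuinely nonsingular. The algebraic elimination yielding $AX + X^\star B = C$ is routine once the nonsingularity facts are in place.
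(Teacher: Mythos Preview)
Your outline for part~(i) is essentially the paper's argument: from the block equations you correctly isolate $A^\star V_1 = U_2 T_1^\star$ and $B V_1 = U_2 T_2$, and since $T_1^\star-\lambda T_2\sim B^\star-\lambda A$ has spectrum disjoint from $\sigma(A^\star-\lambda B)$ by reciprocal-freeness, Theorem~\ref{Eric0} applies. Note that Theorem~\ref{Eric0} gives $V_1=U_2=0$ \emph{simultaneously}; your two-step version (first $V_1=0$, then $U_2=0$ via invertibility of $T_1^\star$) has a small gap, since reciprocal-freeness of $\sigma(A^\star-\lambda B)$ does \emph{not} force $\infty\notin\sigma(A^\star-\lambda B)$---it only forbids $0$ and $\infty$ together---so $T_1^\star$ need not be invertible.

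For part~(ii) your structural plan (build a companion deflating subspace carrying $\sigma(B^\star-\lambda A)$, stack, apply Lemma~\ref{LemmAB} to get $V_1$ nonsingular, then show $U_2$ nonsingular from $BV_1=U_2T_2$) matches the paper. The genuine gap is in your last paragraph: the elimination you call ``routine'' does \emph{not} land directly on $AX+X^\star B=C$. Writing $X=U_1V_1^{-1}$ and $Y=V_2U_2^{-1}$, the four block equations give only
\[
AX+(-Y)B=C\quad\text{and}\quad A(-Y^\star)+X^\star B=C,
\]
two equations coupling $X$ and $Y$. Neither is the $\star$-Sylvester equation unless $X=-Y^\star$, and that identity (which is exactly the claim $U_1V_1^{-1}=-U_2^{-\star}V_2^\star$) does not follow from ``cross-multiplying representations of $T_1^\star,T_2$''. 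The paper closes this by subtracting the two displayed equations to obtain the homogeneous relation $A(X+Y^\star)-(X+Y^\star)^\star B=0$ and then invoking reciprocal-freeness a \emph{second} time, via the unique-solvability Theorem~\ref{solvable}, to force $X+Y^\star=0$. You need to add this step; without it the argument is incomplete.
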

\begin{proof}

\begin{itemize}
\item[i)]
It follows from \eqref{d1} and \eqref{d2} that
$A^\star V_1 = U_2 T_1^\star$ and 
$B V_1  =U_2 T_2$.
Since $\sigma(A^\star-\lambda B)\cap\sigma(T_1^\star-\lambda T_2)=\phi$, we have $V_1 = U_2 = 0$ by Theorem~\ref{Eric0}.
\item[ii)]
Notice that, since $T_1^\star-\lambda T_2\sim A^\star-\lambda B$,
there exist nonsingular matrices $U$ and $V$ such that
\begin{equation}\label{eq:ccond2}
B^\star U = V T_2^\star \mbox{ and } A U = V T_1.
\end{equation}
Thus, it can be seen that
\begin{subequations}\label{eq:k}
\begin{align}
\mathcal{Z}^\star\bbb U \\ 0 \eb &=\bbb 0 \\ V\eb T_2^\star,\label{space1}\\
\mathcal{Z}\bbb U \\ 0 \eb &=\bbb 0 \\ V\eb T_1. \label{space2}
\end{align}
\end{subequations}

Hence, by~\eqref{eq:cond1} and~\eqref{eq:k}, we have
\begin{subequations}\label{eq:cond2}
\begin{align}
\mathcal{Z}^\star\bbb U & U_1\\ 0& V_1 \eb &=\bbb 0 & U_2\\ V & V_2 \eb
\bbb T_2 & 0\\ 0 &  T_1^\star\eb,\\
\mathcal{Z}\bbb U & U_1\\ 0& V_1 \eb &=\bbb 0 & U_2\\ V & V_2 \eb \bbb T_1^\star & 0\\ 0 &  T_2\eb.
\label{space22}
\end{align}
\end{subequations}

Since $\sigma(\mathcal{Z}^\star - \lambda \mathcal{Z}) = \sigma(A^\star - \lambda B) \cup \sigma(B^\star - \lambda A)$, by Lemma~\ref{LemmAB}, the matrix
$\bbb U & U_1\\ 0& V_1 \eb$ is nonsingular. Thus, $V_1$ is nonsingular.

To show that $X = U_1V_1^{-1}$ is the solution of~\eqref{CTS}, we first claim that $U_2$ is nonsingular.
From~\eqref{eq:ccond2}, we see that $T_2$ is invertible
since matrices $B$, $U$, and $V$ are nonsingular.
From~\eqref{space22}, we know that
$
B V_1=U_2T_2
$.
Since
$B$, $T_2$ and $V_1$ are nonsingular, this implies that $U_2$ is invertible.

Let $\widehat{T}_1=U_2 T_1^\star V_1^{-1}$, $\widehat{T}_2=U_2 T_2 V_1^{-1}$, $X = U_1V_1^{-1}$, and $Y = V_2U_2^{-1}$. It follows from~\eqref{eq:cond2} that
\begin{align*}
A^\star &=  \widehat{T}_1, \quad
B^\star X-C^\star = Y \widehat{T}_1,\\
B &= \widehat{T}_2, \quad
AX-C =Y\widehat{T}_2.
\end{align*}
This implies that
$
A (-Y^\star)+X^\star B =C \mbox{ and }
AX+(-Y)B =C,
$
i.e.,
\begin{align}\label{eqxyy}
A (X+Y^\star) -  (X+Y^\star)^\star B=0.
\end{align}
Since $\sigma(A^\star-\lambda B)$ is reciprocal free, it follows that
$\sigma(A^\star+\lambda B)$ is reciprocal free. Thus, we have $X=-Y^\star$ and $AX + X^\star B = C$ by the uniqueness of the solution of~\eqref{eqxyy}.
\end{itemize}
\end{proof}

Theorem~\ref{deflatingthm} shows that if $A^\star-\lambda B$ is reciprocal free and $B$ is nonsingular, we can solve~\eqref{CTS} by the deflating subspace method. Also, the reciprocal free condition implies that either $A$ or $B$ is nonsingular. Thus, we might assume without loss of generality that $B$ is nonsingular. Otherwise, we can replace $\mathcal{Z}^\star$ of \eqref{d1} with $\mathcal{Z}$ of \eqref{d2} in Theorem~\ref{deflatingthm}.
In the proof of Theorem~\ref{deflatingthm}, we know that if $B$ is nonsingular, then $T_2$ is invertible. We then are able to transform the formulae defined in~\eqref{eq:cond1} into the {palindromic} eigenvalue problem as follows.
\begin{corollary}\label{CTSsol}
%
Suppose that  $\sigma(A^\star - \lambda B)$ is reciprocal free
and the matrix $B$ is nonsingular. If there exists a full rank matrix  $\left[\begin{array}{c}U \\V\end{array}\right]$ such that
\begin{equation}
\mathcal{Z}^\star \left[\begin{array}{c}U \\V\end{array}\right] = \mathcal{Z}  \left[\begin{array}{c}U \\V\end{array}\right]  T,
\end{equation}
for some matrix $T$ with $\sigma(T) = \sigma(A^\star-\lambda B)$, then $V$ is nonsingular and  $X= UV^{-1}$
is the unique solution of the $\star$-Sylvester matrix equation~\eqref{CTS}.
\end{corollary}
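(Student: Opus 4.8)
The plan is to derive Corollary~\ref{CTSsol} from Theorem~\ref{deflatingthm}(ii) by recognizing the single identity $\mathcal{Z}^\star\bbb U\\V\eb=\mathcal{Z}\bbb U\\V\eb T$ as the special case of the pair~\eqref{eq:cond1} in which $U_1=U$, $V_1=V$, $\bbb U_2\\V_2\eb:=\mathcal{Z}\bbb U\\V\eb$, $T_1:=T^\star$ and $T_2:=I$: with these choices \eqref{d2} holds automatically and \eqref{d1} is exactly the hypothesis. Comparing the two block rows of $\mathcal{Z}^\star\bbb U\\V\eb=\mathcal{Z}\bbb U\\V\eb T$ (with $\mathcal{Z}=\bbb 0&B\\A&-C\eb$) produces the two identities $A^\star V=BVT$ and $B^\star U-C^\star V=(AU-CV)T$ one actually works with.

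The one point that needs real care is that Theorem~\ref{deflatingthm}(ii) asks for the spectral equivalence $T_1^\star-\lambda T_2\sim A^\star-\lambda B$, while Corollary~\ref{CTSsol} only assumes $\sigma(T)=\sigma(A^\star-\lambda B)$. I would bridge this by first showing directly that $V$ is nonsingular. From $A^\star V=BVT$ and the nonsingularity of $B$ it follows that $\ker V$ is invariant under $T$; if $\ker V\neq\{0\}$ it would contain an eigenvector $x$ of $T$, say $Tx=\mu x$ with $\mu\in\sigma(T)=\sigma(A^\star-\lambda B)$, and then the identity $B^\star U-C^\star V=(AU-CV)T$ evaluated at $x$ collapses to $(B^\star-\mu A)Ux=0$ with $Ux\neq0$ (the latter because $\bbb U\\V\eb$ has full rank), forcing $\mu\in\sigma(B^\star-\lambda A)$; this contradicts reciprocal-freeness of $\sigma(A^\star-\lambda B)$, which makes $\sigma(A^\star-\lambda B)$ and $\sigma(B^\star-\lambda A)$ disjoint. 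With $V$ nonsingular, $A^\star-\lambda B=(BV)(T-\lambda I)V^{-1}$ shows $T-\lambda I$ to be strictly equivalent to $A^\star-\lambda B$, so the hypothesis of Theorem~\ref{deflatingthm}(ii) is met (that of part~(i) is not, by the same disjointness). Since $B$ is nonsingular, Theorem~\ref{deflatingthm}(ii) then yields that $X=UV^{-1}=-U_2^{-\star}V_2^\star$ solves~\eqref{CTS}.

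It remains to promote ``$X$ solves~\eqref{CTS}'' to ``$X$ is the unique solution''; together with the nonsingularity of $V$ established above, this completes the proof. For uniqueness I would invoke Theorem~\ref{solvable}: the pencil $A-\lambda B^\star$ is regular because $B$ is nonsingular, and reciprocal-freeness of $\sigma(A^\star-\lambda B)$ forces $\lambda\neq 1/\lambda^\star$ for each eigenvalue $\lambda$, which for $\star=\top$ rules out $\lambda=\pm1$ (hence $1\notin\sigma(A^\top-\lambda B)$) and for $\star=H$ gives $|\lambda|\neq1$. Thus the hypotheses of Theorem~\ref{solvable} hold in both cases, \eqref{CTS} has a unique solution, and it must equal $X=UV^{-1}$. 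The step I expect to be the main obstacle is the one in the second paragraph --- turning equality of spectra into equivalence of pencils --- as it is the only place where the full-rank hypothesis on $\bbb U\\V\eb$ and the disjointness supplied by reciprocal-freeness are genuinely used; the rest is bookkeeping around Theorem~\ref{deflatingthm}.
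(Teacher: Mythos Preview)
Your proof is correct and follows the same reduction the paper intends: specialize Theorem~\ref{deflatingthm}(ii) with $T_1=T^\star$, $T_2=I$, and $\bbb U_2\\V_2\eb=\mathcal{Z}\bbb U\\V\eb$. The paper gives no separate argument for the corollary; the surrounding text simply remarks that, since $B$ nonsingular forces $T_2$ to be invertible in the proof of Theorem~\ref{deflatingthm}, the pair~\eqref{eq:cond1} collapses to the single palindromic relation.

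Where you go beyond the paper is in the middle step, and usefully so. You correctly observe that the hypothesis $\sigma(T)=\sigma(A^\star-\lambda B)$ is a priori weaker than the pencil equivalence that the proof of Theorem~\ref{deflatingthm}(ii) actually invokes (the paper's symbol $\sim$ is introduced as mere equality of spectra, yet the proof immediately produces nonsingular matrices $U,V$ with $B^\star U=VT_2^\star$ and $AU=VT_1$). Your direct kernel argument for the invertibility of $V$---using the $T$-invariance of $\ker V$ from $A^\star V=BVT$, the full-rank hypothesis on $\bbb U\\V\eb$, and the disjointness of $\sigma(A^\star-\lambda B)$ and $\sigma(B^\star-\lambda A)$ forced by reciprocal-freeness---closes this gap cleanly, after which $A^\star-\lambda B=(BV)(T-\lambda I)V^{-1}$ gives the strict equivalence needed. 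Your appeal to Theorem~\ref{solvable} for uniqueness is also sound and is again something the paper leaves implicit.
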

From Corollary~\ref{CTSsol}, it can be seen that the solution of~\eqref{CTS} can be obtained by solving
the palindromic eigenvalue problem~\eqref{eq:PEP}.
We refer the reader to~\cite{Chu2008,Chu2010, Hilliges2004} for more details. 
It should also be noted that the assumption of the existence of a full rank matrix $\bbb U \\ V\eb$ is always true, since the eigenvalues of $\mathcal{Z}^\star  -\lambda\mathcal{Z}$  are composed of $\sigma(A^\star - \lambda B) \cup \sigma(B^\star - \lambda A)$.

\section{An efficient iterative method}
In this section, we want to discuss how to apply  the palindromic doubling algorithm (abbreviated as PDA)  \cite{Li2011} to find the stabilizing solution of ~\eqref{CTS}. The stabilizing solution $X$ of algebraic Riccati equations has been an extremely active area of the design of feedback controller \cite{Lancaster1995}. Our interest in the stabilizing solution $X$ of ~\eqref{CTS} originates from the solution of the $\star$-Riccati matrix equation
\begin{align}\label{starRic}
XAX^\star+XB+CX^\star+D=0
\end{align}
from an application related to the palindromic eigenvalue problem \cite{Hilliges2004,Chu2010}. Finding the solution of the $\star$-Riccati matrix equation is a difficult treatment. The application of Newton's method is a reasonable possibility and leads to the iterative process
\begin{eqnarray}\label{NW}
(C+X_kA)X_{k+1}^\star+X_{k+1}(B+AX_k^\star)=X_k A X_k^\star-D,
\end{eqnarray}
which is $\star$-Sylvester matrix equation with respect to $X_{k+1}$ for nonnegative integer $k$. To guarantee the convergence of the Newton's method \eqref{NW}, under some mild assumptions on coefficient matrices of $\star$-Riccati matrix equation~\eqref{NW}, one can choose the initial value $X_0$ such that the spectrum set of $\sigma(B+AX_k A-\lambda (C^\star+A^\star X_k^\star))$ lies in unit circle \cite{Chiang2012} for nonnegative integer $k$ so that the Newton's method will quadratically converge to the stabilizing solution.  We focus on how efficiently solve the $\star$-Sylvester matrix equation in this paper. According to above discussion, the definition of stabilizing solution $X$ of ~\eqref{CTS} is stated as follows,
\begin{Definition}\label{stab}
If $\sigma(A^\star-\lambda B)$ lies in the unit circle in ~\eqref{CTS}, then the solution $X$ (if exist) of ~\eqref{CTS} is called a stabilizing solution.
\end{Definition}
For a matrix pencil $A^\star-\lambda B$, if $\sigma(A^\star-\lambda B)$ lies in the unit circle, it is clear that $\sigma(A^\star-\lambda B)$ is $\star$-reciprocal free. It follows that the stabilizing solution always exists and is unique.
Our approach  in the next subsection is to develop an efficient numerical algorithm for finding the stabilizing solution of \eqref{CTS}.
To this end, we start with a review of the so-called palindromic doubling algorithm. This method has been applied to obtain the stabilizing solutions of the generalized continuous-time (and the generalized discrete-time) algebraic Riccati equations~\cite{Li2011}.

\subsection{Palindromic Doubling Algorithm}
Given a matrix pencil $\mathcal{A}-\lambda \mathcal{B}$ and assume $-1\not\in \sigma (\mathcal{A}-\lambda\mathcal{B})$, since $ \mathcal{B} (\mathcal{A}+ \mathcal{B})^{-1} (\mathcal{A}+ \mathcal{B}) =  \mathcal{B} =
(\mathcal{A}+ \mathcal{B}) (\mathcal{A}+ \mathcal{B})^{-1} \mathcal{B}$, it is easy to see that
\begin{eqnarray}\label{swap}
\mathcal{B}(\mathcal{A}+\mathcal{B})^{-1} \mathcal{A}
&=& \mathcal{B} -\mathcal{B}(\mathcal{A}+\mathcal{B})^{-1} \mathcal{B}
= \mathcal{A}(\mathcal{A}+\mathcal{B})^{-1} \mathcal{B}\label{ZZstar}.
\end{eqnarray}
We now consider the \emph{doubling transformation} $\mathcal{A}-\lambda \mathcal{B}\rightarrow \widehat{\mathcal{A}}-\lambda \widehat{\mathcal{B}}$ by
\begin{subequations}\label{doublingtrans}
\begin{align}
\widehat{\mathcal{A}} = \mathcal{A}(\mathcal{A}+\mathcal{B})^{-1}\mathcal{A},\\
\widehat{\mathcal{B}} = \mathcal{B}(\mathcal{A}+\mathcal{B})^{-1}\mathcal{B}.
\end{align}
\end{subequations}
The following theorem is to show that such transformation will keep the eigenspace unchanged and double the original eigenvalues.

\begin{theorem}\label{doublingtransthm}
The matrix pair $\widehat{\mathcal{A}}-\lambda \widehat{\mathcal{B}}$ has the doubling
property, i.e., if
\begin{align}\label{CAB}
\mathcal{A} \bbb U\\V\eb=\mathcal{B} \bbb U\\V\eb {T},
\end{align}
where $U,V\in\mathbb{C}^{2n\times n}$ and
$ T\in\mathbb{C}^{n\times n}$, then
\begin{align}\label{CAB2}
\widehat{\mathcal{A}} \bbb U\\V\eb=\widehat{\mathcal{B}} \bbb U\\V\eb {T}^2.
\end{align}
\end{theorem}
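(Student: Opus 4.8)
The plan is to start from the hypothesis~\eqref{CAB} and manipulate it into the form~\eqref{CAB2} using only the algebraic identity~\eqref{ZZstar} together with the definitions~\eqref{doublingtrans}. First I would rewrite~\eqref{CAB} as $\mathcal{A}\bbb U\\V\eb = \mathcal{B}\bbb U\\V\eb T$, apply $\mathcal{A}(\mathcal{A}+\mathcal{B})^{-1}$ on the left of the left-hand side and $\mathcal{B}(\mathcal{A}+\mathcal{B})^{-1}$ on the left of the right-hand side — which are equal as operators on the relevant vector by~\eqref{ZZstar}, since $\mathcal{A}(\mathcal{A}+\mathcal{B})^{-1}\mathcal{B} = \mathcal{B}(\mathcal{A}+\mathcal{B})^{-1}\mathcal{A}$. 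Concretely, from~\eqref{CAB} we get
\begin{align*}
\widehat{\mathcal{A}}\bbb U\\V\eb = \mathcal{A}(\mathcal{A}+\mathcal{B})^{-1}\mathcal{A}\bbb U\\V\eb = \mathcal{A}(\mathcal{A}+\mathcal{B})^{-1}\mathcal{B}\bbb U\\V\eb T = \mathcal{B}(\mathcal{A}+\mathcal{B})^{-1}\mathcal{A}\bbb U\\V\eb T,
\end{align*}
where the last equality is exactly~\eqref{ZZstar}. Then apply~\eqref{CAB} once more inside the last expression: $\mathcal{A}\bbb U\\V\eb T = \mathcal{B}\bbb U\\V\eb T^2$, giving $\widehat{\mathcal{A}}\bbb U\\V\eb = \mathcal{B}(\mathcal{A}+\mathcal{B})^{-1}\mathcal{B}\bbb U\\V\eb T^2 = \widehat{\mathcal{B}}\bbb U\\V\eb T^2$, which is~\eqref{CAB2}.

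The only structural point that needs care is the standing assumption $-1 \notin \sigma(\mathcal{A}-\lambda\mathcal{B})$, which guarantees that $\mathcal{A}+\mathcal{B}$ is invertible so that~\eqref{doublingtrans} and~\eqref{ZZstar} make sense; I would state this explicitly at the start of the proof (it is already in force in the preceding paragraph of the paper). No regularity or full-rank hypothesis on $\bbb U\\V\eb$ is actually used — the identity is purely formal once $\mathcal{A}+\mathcal{B}$ is nonsingular — so I would not invoke Theorem~\ref{lancaster} or any spectral result here.

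There is essentially no obstacle: the proof is a three-line chain of substitutions, and the ``hard part'' is merely bookkeeping — making sure the factor $(\mathcal{A}+\mathcal{B})^{-1}$ is handled consistently and that identity~\eqref{ZZstar} is quoted in the correct direction (one reads it as $\mathcal{A}(\mathcal{A}+\mathcal{B})^{-1}\mathcal{B} = \mathcal{B}(\mathcal{A}+\mathcal{B})^{-1}\mathcal{A}$, which is the symmetric consequence of the displayed equation). I would close by remarking that the transformation leaves the eigenspace (the column space of $\bbb U\\V\eb$) unchanged while squaring the pencil $T$, which is the ``doubling'' behaviour promised in the statement; iterating it $k$ times therefore replaces $T$ by $T^{2^k}$, and this is what makes the eigenvalues inside the unit circle contract quadratically — the mechanism later exploited to reach the stabilizing solution of~\eqref{CTS}.
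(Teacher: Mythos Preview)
Your proof is correct and is exactly the approach the paper takes: pre-multiply~\eqref{CAB} by $\mathcal{A}(\mathcal{A}+\mathcal{B})^{-1}$, invoke the swap identity~\eqref{ZZstar}, and substitute~\eqref{CAB} once more to obtain~\eqref{CAB2}. The paper compresses this into a single sentence, whereas you have written out the chain of equalities explicitly, but the argument is identical.
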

\begin{proof}
Pre-multiplying the both sides of \eqref{CAB} by
$\mathcal{A}(\mathcal{A}+\mathcal{B})^{-1}$ and applying \eqref{swap} and \eqref{doublingtrans}, we obtain \eqref{CAB2}, which completes the proof.
%
%
%
%

\end{proof}

To see how the doubling transformation can be applied to obtain the stabilizing solution of \eqref{CTS}, write the matrix $Z$ in~\eqref{eq:Zmat} as
\begin{subequations}\label{PDA}
\begin{align}\label{DA1}
\mathcal{Z}=\mathcal{H}+\mathcal{K},
\end{align}
where
\begin{equation}\label{DA2}
\mathcal{H}=\frac{1}{2}(\mathcal{Z}^\star+\mathcal{Z})=\mathcal{H}^\star,\quad
\mathcal{K}=\frac{1}{2}(-\mathcal{Z}^\star+\mathcal{Z})=-\mathcal{K}^\star.
\end{equation}
\end{subequations}
Note that matrices $\mathcal{H}$ and $\mathcal{K}$ are the Hermitian(symmetric) part and skew-Hermitian(skew-symmetric) part of $ \mathcal{Z}$ with $\star=H(\top)$, respectively.
Since $\mathcal{Z}$ is an upper anti-triangular block matrix, we proceed to express $\mathcal{K}$ as
\begin{align}\label{matK}
\mathcal{K} =\bbb 0 & K_{12}\\ -(K_{12})^\star&-K_{22}\eb,
\end{align}
and define an initial matrix $\mathcal{H}_0$ for the PDA as
\begin{align}\label{matH}
\mathcal{H}_0=\bbb 0 & H_{12}^{(0)}\\ (H_{12}^{(0)})^\star& -H_{22}^{(0)}\eb,
\end{align}
where $H_{12}^{(0)}=\frac{A^\star+B}{2}$ and
$H_{22}^{(0)}=\frac{C^\star+C}{2}$,
$K_{12}=\frac{-A^\star+B}{2}$ and
$K_{22}=\frac{-C^\star+C}{2}$. Based on the above notation, we then generalize the PDA technique given in~\cite{Li2011} for obtaining the stabilizing solution of~\eqref{CTS} as follows.

\vspace*{0.3cm}
\noindent{\textbf{Algorithm 1: the PDA}} \\
Given matrices $\mathcal{K}$ and $\mathcal{H}_0$ as defined by~\eqref{matK} and~\eqref{matH},\\
\textbf{for} $k=0,1,\ldots$, \textbf{compute until convergence} \\
\begin{subequations}\label{Ntmalg}
\begin{eqnarray}
T_k&=&(H_{12}^{(k)})^{-1} K_{12},\\
H_{12}^{(k+1)}&=& \frac{1}{2}(H_{12}^{(k)}+K_{12}T_k),\\
H_{22}^{(k+1)}&=&\frac{1}{2}(H_{22}^{(k)}+T_k^\star H_{22}^{(k)}T_k
+K_{22}T_k-T_k^\star K_{22}).
\end{eqnarray}
\end{subequations}
\textbf{end for}\\
\textbf{End of algorithm} \vspace*{0.3cm}\\

Of particular interest is that the stabilizing solution $X$ of \eqref{CTS}
can be represented by
\begin{equation*}
X=\lim\limits_{k\rightarrow\infty} X_k,
\end{equation*}
where
\begin{align}\label{eq:xk}
X_k=(H_{12}^{(k)}+K_{12})^{-\star}(H_{22}^{(k)}+K_{22})^\star.
\end{align}
We prove this result in the following theorem.
\begin{theorem}\label{PDA Solvent}
Suppose that $\sigma(A-\lambda B^\star)$ lies in the unit circle and let $X$ be the stabilizing solution of~\eqref{CTS}. Then, all iterations given in Algorithm~1 are well-defined,  and the sequences $\{Z_{12}^{(k)}, Z_{21}^{(k)}\}$ (See the below definition in (33)) and $\{X_k\}$ in~\eqref{eq:xk} satisfy
%
\begin{eqnarray*}
Z_{12}^{(k)} & \rightarrow& -A^\star+B, \mbox{ quadratically as } k\rightarrow\infty,\\
Z_{21}^{(k)} &\rightarrow& 0, \mbox{ quadratically as } k\rightarrow\infty,\\
X_k &\rightarrow& X, \mbox{quadratically as }k\rightarrow\infty,
\end{eqnarray*}
with convergence rate $\rho$ define by $\rho = \max\limits_{\tau\in\sigma(A-\lambda B^\star)} |\tau|<1$.
\end{theorem}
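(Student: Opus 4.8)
The plan is to recognize Algorithm~1 as the matrix doubling iteration $\mathcal{Z}_0=\mathcal{Z}$, $\mathcal{Z}_{k+1}=\mathcal{Z}_k(\mathcal{Z}_k^\star+\mathcal{Z}_k)^{-1}\mathcal{Z}_k$ acting on the palindromic pencil $\mathcal{Z}^\star-\lambda\mathcal{Z}$, and then to run a single induction on $k$ that establishes well-definedness and the three limits simultaneously. Writing $\mathcal{H}_k=\tfrac12(\mathcal{Z}_k^\star+\mathcal{Z}_k)=\mathcal{H}_k^\star$ and $\mathcal{K}_k=\tfrac12(\mathcal{Z}_k-\mathcal{Z}_k^\star)=-\mathcal{K}_k^\star$, a direct block computation using $\mathcal{Z}_k=\mathcal{H}_k+\mathcal{K}_k$ gives $\mathcal{H}_{k+1}=\tfrac12(\mathcal{H}_k+\mathcal{K}_k\mathcal{H}_k^{-1}\mathcal{K}_k)$ and $\mathcal{K}_{k+1}=\mathcal{K}_k$, so $\mathcal{K}_k\equiv\mathcal{K}$; since the $(1,1)$ block of $\mathcal{K}\mathcal{H}_k^{-1}\mathcal{K}$ vanishes whenever that of $\mathcal{H}_k$ does, each $\mathcal{H}_k$ keeps the form $\bbb 0 & H_{12}^{(k)}\\ (H_{12}^{(k)})^\star & -H_{22}^{(k)}\eb$, and consequently $\mathcal{Z}_k=\bbb 0 & Z_{12}^{(k)}\\ Z_{21}^{(k)} & Z_{22}^{(k)}\eb$ with $Z_{12}^{(k)}=H_{12}^{(k)}+K_{12}$, $(Z_{21}^{(k)})^\star=H_{12}^{(k)}-K_{12}$, and $Z_{22}^{(k)}=-(H_{22}^{(k)}+K_{22})$. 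Matching the $(1,2)$ and $(2,2)$ blocks of $\mathcal{H}_{k+1}=\tfrac12(\mathcal{H}_k+\mathcal{K}\mathcal{H}_k^{-1}\mathcal{K})$ against~\eqref{Ntmalg} shows this iteration is precisely Algorithm~1 (with $T_k=(H_{12}^{(k)})^{-1}K_{12}$) and that the matrix in~\eqref{eq:xk} equals $X_k=-(Z_{12}^{(k)})^{-\star}(Z_{22}^{(k)})^\star$. Finally, since $\sigma(A^\star-\lambda B)$ lies in the open unit disk, the pencil is regular and $B$ is nonsingular (no infinite eigenvalue), so $R:=B^{-1}A^\star$ has $\sigma(R)=\sigma(A^\star-\lambda B)$, spectral radius $\rho<1$, and a one-line substitution into $AX+X^\star B=C$ shows $\mathcal{Z}^\star\bbb X\\ I\eb=\mathcal{Z}\bbb X\\ I\eb R$.

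I would then induct with hypothesis ``$\mathcal{H}_0,\dots,\mathcal{H}_k$ nonsingular''. The base case holds because $\mathcal{H}_0$ is nonsingular iff $H_{12}^{(0)}=\tfrac12(A^\star+B)$ is, i.e.\ iff $-1\notin\sigma(A^\star-\lambda B)$, which is true. Under the hypothesis $\mathcal{Z}_1,\dots,\mathcal{Z}_{k+1}$ are defined and Theorem~\ref{doublingtransthm} applies repeatedly to the relation above, giving $\mathcal{Z}_j^\star\bbb X\\ I\eb=\mathcal{Z}_j\bbb X\\ I\eb R^{2^j}$ for $0\le j\le k+1$. Equating the top blocks and using $Z_{12}^{(j)}-(Z_{21}^{(j)})^\star=2K_{12}$ yields $Z_{12}^{(j)}(I-R^{2^j})=2K_{12}$, hence the closed forms
\[
Z_{12}^{(j)}=2K_{12}(I-R^{2^j})^{-1},\qquad H_{12}^{(j)}=K_{12}(I-R^{2^j})^{-1}(I+R^{2^j}),\qquad 0\le j\le k+1 .
\]
Now $I\pm R^{2^j}$ are nonsingular because $\sigma(R^{2^j})$ lies in the open unit disk, and $K_{12}=\tfrac12(B-A^\star)$ is nonsingular because $1\notin\sigma(A^\star-\lambda B)$; hence $H_{12}^{(k+1)}$, and so $\mathcal{H}_{k+1}$, is nonsingular, closing the induction and proving that all iterations of Algorithm~1 are well-defined.

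For the limits I would use $\|R^{2^{k+1}}\|\le\|R^{2^k}\|^{2}$ together with $\|R^{2^k}\|^{1/2^k}\to\rho<1$, so that $R^{2^k}\to0$ quadratically with rate $\rho$. The closed forms then give $Z_{12}^{(k)}=2K_{12}(I-R^{2^k})^{-1}\to -A^\star+B$ and $(Z_{21}^{(k)})^\star=Z_{12}^{(k)}-2K_{12}=2K_{12}(I-R^{2^k})^{-1}R^{2^k}\to0$, both at rate $\rho$. Equating the bottom blocks of $\mathcal{Z}_k^\star\bbb X\\ I\eb=\mathcal{Z}_k\bbb X\\ I\eb R^{2^k}$ gives $(Z_{12}^{(k)})^\star X+(Z_{22}^{(k)})^\star=(Z_{21}^{(k)}X+Z_{22}^{(k)})R^{2^k}$; inserting this into $X_k=-(Z_{12}^{(k)})^{-\star}(Z_{22}^{(k)})^\star$ and simplifying yields $X_k-X=-(Z_{12}^{(k)})^{-\star}(Z_{21}^{(k)}X+Z_{22}^{(k)})R^{2^k}$. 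In this identity $(Z_{12}^{(k)})^{-\star}$ and $Z_{21}^{(k)}$ are bounded (they converge), and $Z_{22}^{(k)}$ is bounded by the absorbing inequality $\|Z_{22}^{(k)}\|\,(1-\|R^{2^k}\|)\le\|Z_{21}^{(k)}\|\,\|X\|\,\|R^{2^k}\|+\|Z_{12}^{(k)}\|\,\|X\|$, valid once $\|R^{2^k}\|<1$; hence $\|X_k-X\|\le c\,\|R^{2^k}\|$, i.e.\ $X_k\to X$ quadratically with rate $\rho$.

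The main obstacle is the interlock between well-definedness and the explicit formulas: the nonsingularity of $H_{12}^{(k)}$ is only visible through the doubling identity $\mathcal{Z}_k^\star\bbb X\\ I\eb=\mathcal{Z}_k\bbb X\\ I\eb R^{2^k}$, which itself presupposes that every earlier doubling step was legitimate, so the argument has to be packaged as one simultaneous induction rather than as two separate lemmas. The remaining technical work is the block bookkeeping that verifies $\mathcal{Z}_k\mapsto\mathcal{Z}_k(\mathcal{Z}_k^\star+\mathcal{Z}_k)^{-1}\mathcal{Z}_k$ really reproduces the componentwise recursions~\eqref{Ntmalg}, and the absorbing estimate that keeps $Z_{22}^{(k)}$ (equivalently $H_{22}^{(k)}$) bounded, which is needed because the theorem only asserts convergence of $X_k$ and not of the $(2,2)$ blocks.
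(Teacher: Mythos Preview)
Your proof is correct and follows essentially the same route as the paper: identify Algorithm~1 with the palindromic doubling $\mathcal{Z}_{k+1}=\mathcal{Z}_k(\mathcal{Z}_k^\star+\mathcal{Z}_k)^{-1}\mathcal{Z}_k$, propagate the deflating relation $\mathcal{Z}_k^\star\bbb X\\ I\eb=\mathcal{Z}_k\bbb X\\ I\eb R^{2^k}$, and read off the limits of the blocks. The minor differences are only in execution---you work with the concrete $R=B^{-1}A^\star$ and the explicit closed form $Z_{12}^{(k)}=2K_{12}(I-R^{2^k})^{-1}$ to get well-definedness of $H_{12}^{(k)}$ directly, whereas the paper uses the abstract pair $(U,V,T)$ from Theorem~\ref{deflatingthm} and a spectral argument ($-1\notin\sigma(\mathcal{Z}_k^\star-\lambda\mathcal{Z}_k)$); and for the $X_k$ estimate you bound $Z_{22}^{(k)}$ by an absorbing inequality while the paper rewrites the residual explicitly as $-(K_{12}^\star U+K_{22}V)\,2(I-T^{2^k})^{-1}T^{2^k}$---but these lead to the same conclusion by the same mechanism.
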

\begin{proof}
Let $\{\mathcal{H}_k\}$ be a sequence given by
\begin{align*}
\mathcal{H}_k=\bbb 0 & H_{12}^{(k)}\\ (H_{12}^{(k)})^\star& -H_{22}^{(k)}\eb.
\end{align*}
Following from a direct computation of the inverse of $\mathcal{H}_k$ (if exist) and the fact that
\begin{equation*}
 \mathcal{H}_k^{-1} = \bbb (H_{12}^{(k)})^{-\star} H_{22}^{(k)} (H_{12}^{(k)})^{-1} & (H_{12}^{(k)})^{-\star} \\ (H_{12}^{(k)})^{-1} & 0 \eb,
\end{equation*}
we obtain
\begin{align}\label{HK}
\mathcal{H}_{k+1}=\frac{1}{2}(\mathcal{H}_k+\mathcal{K}\mathcal{H}_k^{-1}\mathcal{K}).
\end{align}
Now consider a sequence of iterations given by
\begin{align}\label{eq:Zk}
\mathcal{Z}_k=\mathcal{H}_k+\mathcal{K}\equiv \bbb 0 & Z_{12}^{(k)}\\ Z_{21}^{(k)} & -Z_{22}^{(k)}\eb,
\end{align}
for $k=0,1,\cdots$. It follows from~\eqref{HK} that
\begin{align*}
\mathcal{Z}_{k+1}&=\mathcal{H}_{k+1}+\mathcal{K}=\frac{1}{2}(\mathcal{H}_k+\mathcal{K}+\mathcal{K}\mathcal{H}_k^{-1}\mathcal{K}
+\mathcal{K})\\
&=\frac{1}{2}(\mathcal{H}_k+\mathcal{K})\mathcal{H}_k^{-1}(\mathcal{H}_k+\mathcal{K})=\mathcal{Z}_k(\mathcal{Z}_k+\mathcal{Z}_k^\star)^{-1}\mathcal{Z}_k,
\end{align*}
which is exactly the doubling transformation in \eqref{doublingtrans} with $\mathcal{A}\rightarrow\mathcal{Z}_k^\star$ and $\mathcal{B}\rightarrow\mathcal{Z}_k$. Furthermore, the anti-diagonal block matrix of $\mathcal{Z}_k$ satisfies
\begin{align*}
Z_{12}^{(k+1)}&=Z_{12}^{(k)}(Z_{12}^{(k)}+(Z_{21}^{(k)})^\star)^{-1}Z_{12}^{(k)},\\
Z_{21}^{(k+1)}&=Z_{21}^{(k)}(Z_{21}^{(k)}+(Z_{12}^{(k)})^\star)^{-1}Z_{21}^{(k)},
\end{align*}
i.e., the matrix pencil $(Z_{21}^{(k+1)})^\star-\lambda Z_{12}^{(k+1)}$ is a doubling transformation of the
matrix pencil $(Z_{21}^{(k)})^\star-\lambda Z_{12}^{(k)}$. Thus, $\sigma((Z_{21}^{(k)})^\star-\lambda Z_{12}^{(k)})$ lies in the unit circle for each $k$, since $(Z_{21}^{(0)})^\star-\lambda Z_{12}^{(0)}=A^\star-\lambda B$. We thus conclude that $Z_{12}^{(k)}$ is invertible for all $k$ by the regularity of $A^\star-\lambda B$. On the other hand,
according to Theorem~\ref{deflatingthm}, the initial value $\mathcal{Z}_0=\mathcal{Z}$ gives rise to the fact that
\begin{equation}\label{init}
\mathcal{Z}_0^\star \left[\begin{array}{c} U \\ V\end{array}\right] = \mathcal{Z}_0  \left[\begin{array}{c}U \\V\end{array}\right]  T,
\end{equation}
for some matrix $T$ with $\sigma(T) = \sigma(A^\star-\lambda B)$ and a nonsingular matrix $V$. This implies that the intersection of the spectrum set $\mathcal{Z}_0^\star-\lambda \mathcal{Z}_0$ and the unit circle is empty. It follows that $-1\not\in\sigma(\mathcal{Z}_k^\star-\lambda \mathcal{Z}_k)$ for each $k$ from Theorem~\ref{doublingtransthm}. Thus, $\mathcal{Z}_k^\star+\mathcal{Z}_k$ is invertible and hence $\mathcal{H}_k=\frac{1}{2}(\mathcal{Z}_k^\star+\mathcal{Z}_k)$ is also a nonsingular matrix for all $k$. That is, all iterations in Algorithm~1 are well defined.
It follows from Theorem~\ref{doublingtransthm} that
\begin{equation}\label{doubspac}
\mathcal{Z}_k^\star \left[\begin{array}{c} U \\ V\end{array}\right] = \mathcal{Z}_k  \left[\begin{array}{c}U \\V\end{array}\right]  T^{2^k},
\end{equation}
that is,
\begin{subequations}
\begin{align}
({Z}_{21}^{(k)})^{\star} V &={Z}_{12}^{(k)} V T^{2^k},\label{eq:sub1}\\
({Z}_{12}^{(k)})^{\star} U-({Z}_{22}^{(k)})^{\star} V &= (({Z}_{21}^{(k)})^{\star} U-({Z}_{22}^{(k)})^{\star} V )T^{2^k}.\label{eq:sub2}
\end{align}
\end{subequations}
By equalities~\eqref{eq:Zk} and~\eqref{eq:sub1}, we have
\begin{align*}
(H_{12}^{(k)}-K_{12}) V &=(H_{12}^{(k)}+K_{12}) V T^{2^k},
\end{align*}
that is,
\begin{align*}
H_{12}^{(k)} - K_{12} =2 K_{12} V T^{2^k}(I-T^{2^k})^{-1} V^{-1}.
\end{align*}
This implies that $\lim\limits_{k\rightarrow\infty}H_{12}^{(k)}=K_{12} $. Thus we conclude that
\begin{align*}
\lim\limits_{k\rightarrow\infty}Z_{21}^{(k)}=\lim\limits_{k\rightarrow\infty}(H_{12}^{(k)}-K_{12}^{(0)})^\star=0,\,\,
\lim\limits_{k\rightarrow\infty}Z_{12}^{(k)}=\lim\limits_{k\rightarrow\infty}(H_{12}^{(k)}+K_{12}^{(0)})=-A^\star+B.
\end{align*}

Note that $Z_{21}^{(k)}$ and $Z_{12}^{(k)}$  converge quadratically to $0$ and $-A^\star+B$ with rate $\rho(T)=\rho$, respectively. On the other hand,  it follows from~\eqref{eq:sub2} and~\eqref{eq:Zk} that
\begin{align*}
(H_{12}^{(k)})^\star U-H_{22}^{(k)} V=-(K_{12}^\star U+K_{22}V)(I+T^{2^k})(I-T^{2^k})^{-1}.
\end{align*}
Thus, the right hand side of \eqref{eq:sub2} can be expressed as
\begin{align}\label{eq:sub3}
&({Z}_{21}^{(k)} U-{Z}_{22}^{(k)} V )T^{2^k}=((H_{12}^{(k)})^\star U-H_{22}^{(k)} V)T^{2^k}-(K_{12}^\star U+K_{22}V)T^{2^k}\nonumber\\
&=-(K_{12}^\star U+K_{22}V)(I+(I+T^{2^k})(I-T^{2^k})^{-1})T^{2^k}.
\end{align}

Let $X = UV^{-1}$. It follows from Theorem~\ref{doublingtransthm} that $X$ is a solution of~\eqref{doubspac}.
Pre-multiplying and post-multiplying \eqref{eq:sub2} by $({Z}_{12}^{(k)})^{-\star}$ and $V^{-1}$, respectively, we have
%
%
\begin{align*}
X-X_k=({Z}_{12}^{(k)})^{-\star}({Z}_{21}^{(k)} U-{Z}_{22}^{(k)} V )T^{2^k}V^{-1},
\end{align*}
where
\begin{equation*}
X_k= (H_{12}^{(k)}+K_{12})^{-\star}(H_{22}^{(k)}+K_{22})^\star = ({Z}_{12}^{(k)})^{-\star}({Z}_{22}^{(k)})^{\star}.
\end{equation*}
From~\eqref{eq:sub3} we have $\limsup\limits_{k\rightarrow\infty}\sqrt[2^k]{\| X-X_k\|}\leq \max\limits_{\tau\in\sigma(A-\lambda B^\star)} |\tau|=\rho$, which completes the proof.
\end{proof}

%

From Theorem~\ref{PDA Solvent}, it is natural to modify the definition $X_k$ in~\eqref{eq:xk} by solving the linear system
\begin{align*}
(-A+B^\star) X_k=Z_{22}^{(k)},\,\, k=1,2,\cdots
\end{align*}
Besides, the capacity of Algorithm~1 for solving~\eqref{CTS} with eigenvalues of $\sigma(A^\star -\lambda B)$ lying on the unit circle
is something worthy of our discussion.
Note that  for the case $\star={H}$, there does not exist any unimodular eigenvalue lying on the unit circle under the condition of the uniquely solvable solution of \eqref{CTS}. However, from Theorem~\ref{solvable} we know that if $1$ is a simple eigenvalue of $\sigma(A^\top -\lambda B)$ and
$\sigma(A^\top-\lambda B)\backslash \{1\}$ is reciprocal free, then the equation~\eqref{CTS} with $\star= \top$ is still solvable.
In this critical case, we call the solution as the \emph{almost} stabilizing solution~\cite{Lancaster1995}.
It is interesting to know that Algorithm~1 can also be applied to solve the critical case of~\eqref{CTS} with no difficulty. For the convergence analysis of this algorithm, we first consider the following analysis of the eigenstructure of the unimodular eigenvalues of $\mathcal{Z}^\top- \lambda \mathcal{Z}$ (i.e., $\lambda=1$).

%
%
%
\begin{lemma}\label{Lem}
Suppose that $1\in\sigma(A^\top-\lambda B)$
and $1$ is simple. Then $\mbox{nullity}(\mathcal{Z}^\top-\mathcal{Z})=2$.
\end{lemma}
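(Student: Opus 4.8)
The plan is to analyze $\mathcal{Z}^\top-\mathcal{Z}$ (a skew-symmetric $2n\times 2n$ matrix), showing its kernel has dimension exactly $2$ by relating it to the eigenstructure of the regular pencil $A^\top-\lambda B$ at the simple eigenvalue $\lambda=1$. Recall $\mathcal{Z}=\bbb 0 & B\\ A & -C\eb$, so $\mathcal{Z}^\top-\mathcal{Z}=\bbb 0 & A^\top-B\\ B^\top-A & C-C^\top\eb$. First I would observe that $-1\notin\sigma(\mathcal{Z}^\top-\lambda\mathcal{Z})$ need not hold here, but what matters is that $\mathcal{Z}^\top-\lambda\mathcal{Z}$ is a regular pencil (since $\sigma(\mathcal{Z}^\top-\lambda\mathcal{Z})=\sigma(A^\top-\lambda B)\cup\sigma(B^\top-\lambda A)$ and both pieces are regular by the hypothesis that $A-\lambda B^\top$, equivalently $A^\top-\lambda B$, is regular). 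So $\mathcal{Z}^\top-\lambda\mathcal{Z}$ has a well-defined Jordan structure, and $\mathrm{nullity}(\mathcal{Z}^\top-\mathcal{Z})$ equals the number of Jordan blocks of $\mathcal{Z}^\top-\lambda\mathcal{Z}$ at the eigenvalue $\lambda=1$.

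The key step is to count the multiplicity of $\lambda=1$ as an eigenvalue of $\mathcal{Z}^\top-\lambda\mathcal{Z}$. Because the spectrum splits as $\sigma(A^\top-\lambda B)\cup\sigma(B^\top-\lambda A)$, and since $\lambda=1$ is its own $\top$-reciprocal, $\lambda=1$ occurs in the first set with the multiplicity it has in $A^\top-\lambda B$ (namely $1$, as it is simple by hypothesis) and in the second set with the multiplicity it has in $B^\top-\lambda A$; but $B^\top-\lambda A=-(A-\lambda^{-1}B^\top)\lambda$ up to scaling, more precisely the eigenvalue $1$ of $B^\top-\lambda A$ corresponds (via $\lambda\mapsto 1/\lambda$) to the eigenvalue $1$ of $A-\lambda B^\top$, hence to the eigenvalue $1$ of $A^\top-\lambda B$ after transposing, which again is simple. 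Thus $\lambda=1$ has algebraic multiplicity exactly $2$ in $\mathcal{Z}^\top-\lambda\mathcal{Z}$. It remains to show the geometric multiplicity is also $2$, i.e. that these two copies of the eigenvalue $1$ sit in two separate $1\times 1$ Jordan blocks rather than one $2\times 2$ block. This I would get from the skew-symmetry of $\mathcal{Z}^\top-\mathcal{Z}$: more concretely, I would exhibit two linearly independent kernel vectors directly. If $\mathbf{u}$ satisfies $(A^\top-B)\mathbf{u}=0$ and $(B^\top-A)\mathbf{u}+(C-C^\top)\cdot 0=0$ is not automatic, so instead I would use the palindromic structure: an eigenvector $x$ of $A^\top-\lambda B$ at $\lambda=1$ (so $A^\top x=Bx$) together with a left eigenvector produces, via the $2n$-dimensional embedding analogous to $\bbb U\\V\eb$ in Theorem~\ref{deflatingthm}, a vector in $\ker(\mathcal{Z}^\top-\mathcal{Z})$; and the reciprocal eigenvalue (also $1$) produces a second, independent one.

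The main obstacle I anticipate is ruling out a single $2\times 2$ Jordan block at $\lambda=1$ — equivalently, proving $\mathrm{nullity}(\mathcal{Z}^\top-\mathcal{Z})=2$ rather than $1$. The clean way to handle this is to use the known structure of palindromic pencils: for a $\top$-palindromic pencil $\mathcal{Z}^\top-\lambda\mathcal{Z}$, Jordan blocks at the eigenvalue $\lambda=1$ occur in a constrained pattern dictated by the skew part (see~\cite{Mackey2006,doi:10.1137/050628350}), and a simple eigenvalue $1$ of the "building block" $A^\top-\lambda B$ forces two $1\times 1$ blocks at $1$ in $\mathcal{Z}^\top-\lambda\mathcal{Z}$. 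Alternatively — and this is the route I would actually write out — I would note that if $\lambda=1$ sat in one $2\times 2$ block, then $\mathcal{Z}^\top-\mathcal{Z}$ would have a rank drop of $1$ only, while a direct parametrization of $\ker(\mathcal{Z}^\top-\mathcal{Z})$ in terms of the (one-dimensional) right deflating space of $A^\top-\lambda B$ at $1$ and the (one-dimensional) right deflating space of $B^\top-\lambda A$ at $1$ yields two independent vectors, because these two spaces, pulled into $\mathbb{C}^{2n}$ through the first and second block-rows of $\mathcal{Z}$ respectively, are independent (their projections onto complementary coordinate blocks are nonzero). Carrying out this independence check is the crux; everything else is bookkeeping with Theorem~\ref{Weierstrass} and the spectral splitting $\sigma(\mathcal{Z}^\top-\lambda\mathcal{Z})=\sigma(A^\top-\lambda B)\cup\sigma(B^\top-\lambda A)$.
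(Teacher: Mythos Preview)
Your high-level strategy is sound and genuinely different from the paper's: you argue abstractly via the spectral splitting $\sigma(\mathcal{Z}^\top-\lambda\mathcal{Z})=\sigma(A^\top-\lambda B)\cup\sigma(B^\top-\lambda A)$ to get algebraic multiplicity $2$ at $\lambda=1$, and then try to show geometric multiplicity equals $2$. The paper instead performs a QZ reduction so that $A^\top$ and $B$ become upper triangular with $a_{11}=b_{11}$, and then writes down two explicit null vectors of $\mathcal{Z}^\top-\mathcal{Z}$ by hand.

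The gap in your write-up is in the lower bound $\mathrm{nullity}\geq 2$. Of your two proposed kernel vectors, the one coming from $\ker(B^\top-A)$ works: if $(B^\top-A)\mathbf{x}_0=0$ then $\bbb \mathbf{x}_0\\ 0\eb\in\ker(\mathcal{Z}^\top-\mathcal{Z})$. But the second one, built from $v_0\in\ker(A^\top-B)$, does \emph{not} come for free by ``embedding through the second block-row'': the vector $\bbb 0\\ v_0\eb$ satisfies $(\mathcal{Z}^\top-\mathcal{Z})\bbb 0\\ v_0\eb=\bbb 0\\ (C-C^\top)v_0\eb$, which is generally nonzero. To get a genuine kernel vector $\bbb u\\ v_0\eb$ you must solve $(B^\top-A)u=(C^\top-C)v_0$, and this solvability is exactly the nontrivial point. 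The paper handles it by the triangular reduction: after QZ, $B^\top-A$ is lower triangular with vanishing first row and linearly independent remaining rows (since $1$ is simple), while $(C^\top-C)\mathbf{e}_1$ has vanishing first entry because $C^\top-C$ is skew, so the system is consistent. Your outline never addresses this consistency condition.

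There is, however, a very short way to close your argument without any construction, and you brush against it when you invoke ``skew-symmetry'': since $\mathcal{Z}^\top-\mathcal{Z}$ is skew-symmetric, its rank is even, hence its nullity is even. You already have nullity $\geq 1$ (from $\bbb \mathbf{x}_0\\0\eb$) and nullity $\leq 2$ (geometric multiplicity is bounded by the algebraic multiplicity $2$ you computed for the regular pencil). Even parity then forces $\mathrm{nullity}=2$. If you make this explicit, your abstract route is complete and arguably cleaner than the paper's coordinate computation; the paper's approach, on the other hand, has the advantage of actually exhibiting a basis of the null space.
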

\begin{proof}
Considering two orthogonal matrices $Q_1$ and $Z_1$, let $Z_1 A Q_1^\top := \widehat{A}= [\hat{a}_{ij}]$
and $ Q_1 B Z_1^\top :=  \widehat{B} = [\hat{b}_{ij}]$  be the QZ or generalized Schur decomposition of  $A^\top$ and $B$ so that $\widehat{A}^\top$ and $\widehat{B}$ are upper-triangular. Corresponding to this decomposition, the matrix pencil $Z^\top - Z$ can be expressed as
\begin{equation}
\mathcal{Z}^\top - \mathcal{Z} =  \bbb Q_1^\top & 0\\ 0 &Z_1^\top \eb
 \left[\begin{array}{cc}
 0 & \widehat{A}^\top-\widehat{B}
 \\
 \widehat{B}^\top-\widehat{A} & \widehat{C}-\widehat{C}^\top
 \end{array}\right]
 \bbb Q_1 & 0\\ 0 & Z_1 \eb
\end{equation}
with $\widehat{C} := Z_1 C  Z_1^\top$.
Thus, in order to discuss
the dimension of the null space  of the matrix $\mathcal{Z}^\top - \mathcal{Z}$, we can assume without loss of generality that  $A^\top$ and $B$ are upper triangular matrices and $a_{11}=b_{11}$.
It follows that there exist
a nonzero vector $\mathbf{x}_0$ and an unit vector  $\mathbf{e}_1$  such that  vectors $\mathbf{x}_0$ and $\mathbf{e}_1$ are in the null space of matrices $B^\top-A$ and $A^\top-B$, respectively.
Since the first row of $B^\top-A$ is a zero row vector and the other rows of $B^\top-A$ are linearly independent, there exists a vector $\mathbf{x}_1$ satisfying
$(B^\top-A)\mathbf{x}_1=(C^\top-C)\mathbf{e}_1$.

Recall also from~\eqref{eq:PEP} that
\begin{align*}
\mathcal{Z}^\top-\mathcal{Z}=\bbb 0 & A^\top-B\\B^\top-A & C-C^\top\eb.
\end{align*}
We see that  $\mathbf{v}_1=\bbb \mathbf{x}_1 \\ \mathbf{e}_1\eb$ and $\mathbf{v}_2=\bbb \mathbf{x}_0\\ \mathbf{0}\eb$
 are two linearly independent vectors in the null space of $\mathcal{Z}^\top-\mathcal{Z}$, that is, $\mbox{nullity}(\mathcal{Z}^\top-\mathcal{Z})=2$.
\end{proof}

Lemma~\ref{Lem} tells that the eigenvalue $``1''$ of $\mathcal{Z}^\top-\lambda\mathcal{Z}$ has partial multiplicity one (two $1\times 1$ Jordan blocks) if
the eigenvalue $``1''$ of $A^\top-\lambda B$ is simple.
Based on Lemma~\ref{Lem}, it can be shown that Algorithm~1 can be applied to solve this problem with a linear rate of convergence. Because the analysis of the convergence property  of Algorithm 1 is an analogous result for the palindromic generalized eigenvalue problem in~\cite{Li2011}[Theorem 3.1], we omit it here.
%
\begin{theorem}\label{DAconvthm}
In the critical case, all sequences generated in Algorithm~1
for finding the almost stabilizing solution $X$ of~\eqref{CTS}
are well defined. Moreover,
\begin{eqnarray*}
Z_{12}^{(k)}&\rightarrow& -A^T+B,\,\mbox{linearly}\,\,
\mbox{as } k\rightarrow\infty, \\
Z_{21}^{(k)} &\rightarrow& 0,\,\mbox{linearly}\,\,
\mbox{as } k\rightarrow\infty, \\
X_k &\rightarrow& X,\,\mbox{linearly}\,\,
\mbox{as }  k\rightarrow\infty,
\end{eqnarray*}
with convergence rate at least $1/2$.
\end{theorem}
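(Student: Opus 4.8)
The plan is to mimic the convergence argument of Theorem~\ref{PDA Solvent}, but to account for the single unimodular eigenvalue $\lambda = 1$ of $\mathcal{Z}^\top - \lambda\mathcal{Z}$ isolated in Lemma~\ref{Lem}. First I would split the spectrum of $A^\top - \lambda B$ into the simple eigenvalue $\{1\}$ and the remaining part $\sigma(A^\top - \lambda B)\setminus\{1\}$, which by Theorem~\ref{solvable} is reciprocal free and hence disjoint from the unit circle except at that one point. Using the Weierstrass canonical form (Theorem~\ref{Weierstrass}) together with Lemma~\ref{LemmAB}, I would exhibit a full-rank block decomposition adapted to $\mathcal{Z}^\top \left[\begin{smallmatrix}U\\V\end{smallmatrix}\right] = \mathcal{Z}\left[\begin{smallmatrix}U\\V\end{smallmatrix}\right] T$ in which $T$ is block-diagonalized as $T = T_s \oplus T_1$, where $\sigma(T_s) = \sigma(A^\top - \lambda B)\setminus\{1\}$ lies strictly inside the unit disk, and $T_1 = [1]$ (a $1\times 1$ block, by the partial-multiplicity-one conclusion of Lemma~\ref{Lem}). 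The nonsingularity of the relevant $V$-block is again guaranteed by Lemma~\ref{LemmAB} applied to the pencil $\mathcal{Z}^\top - \lambda\mathcal{Z}$, whose spectrum is $\sigma(A^\top - \lambda B)\cup\sigma(B^\top - \lambda A)$.

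Next I would run the doubling transformation exactly as in the proof of Theorem~\ref{PDA Solvent}: the identities $\mathcal{Z}_{k+1} = \mathcal{Z}_k(\mathcal{Z}_k + \mathcal{Z}_k^\star)^{-1}\mathcal{Z}_k$ and $\mathcal{Z}_k^\star\left[\begin{smallmatrix}U\\V\end{smallmatrix}\right] = \mathcal{Z}_k\left[\begin{smallmatrix}U\\V\end{smallmatrix}\right]T^{2^k}$ still hold verbatim, and the block relations~\eqref{eq:sub1}--\eqref{eq:sub2} are unchanged. The only place the argument must be modified is where one inverts $I - T^{2^k}$: on the stable block $T_s^{2^k}\to 0$ quadratically, so $(I - T_s^{2^k})^{-1}$ stays bounded; on the $T_1 = [1]$ block, $I - T_1^{2^k} = 0$ and the naive formula breaks down. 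The remedy is to carry the $\lambda = 1$ component separately. Because that block is $1\times 1$ with eigenvalue exactly $1$, the corresponding one-dimensional piece of the deflating subspace is *fixed* under doubling (it contributes $1^{2^k} = 1$), so $H_{12}^{(k)} - K_{12}$ does not vanish there but also does not grow; the decay of $Z_{21}^{(k)} = (H_{12}^{(k)} - K_{12})^\star$ and the convergence $Z_{12}^{(k)}\to -A^\top + B$, $X_k\to X$ are then governed entirely by the stable block. A careful bookkeeping of the two contributions shows the error on the nontrivial part decays like $\rho^{2^k}$ with $\rho < 1$ from $\sigma(T_s)$, while the $\lambda=1$ block, after the cancellation in the doubling recursion, contributes a term that halves at each step — this is where the rate $1/2$ (rather than quadratic) enters. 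I would make this precise by writing $X - X_k$ via the analogue of~\eqref{eq:sub3} with $T^{2^k}$ replaced by its block form and bounding $\|(I + T^{2^k})(I - T^{2^k})^{-1}\|$-type quantities separately on the two invariant pieces, the $\lambda=1$ piece yielding the dominant $O(2^{-k})$ behaviour.

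The main obstacle I anticipate is precisely the handling of the degenerate block: one must verify that the recursions $T_k = (H_{12}^{(k)})^{-1}K_{12}$, $H_{12}^{(k+1)} = \tfrac12(H_{12}^{(k)} + K_{12}T_k)$, $H_{22}^{(k+1)} = \tfrac12(H_{22}^{(k)} + T_k^\star H_{22}^{(k)}T_k + K_{22}T_k - T_k^\star K_{22})$ remain well defined, i.e.\ that $H_{12}^{(k)}$ stays invertible for all $k$ despite the unimodular eigenvalue. This follows from showing $\sigma((Z_{21}^{(k)})^\star - \lambda Z_{12}^{(k)})$ stays on the closed unit disk with $1$ the only boundary point and simple — a property preserved by doubling since squaring fixes $1$ and shrinks everything else toward $0$ — combined with the regularity of $A^\top - \lambda B$ to rule out $Z_{12}^{(k)}$ becoming singular; the $\lambda=1$ eigenvalue does not threaten invertibility of $H_{12}^{(k)} = \tfrac12(Z_{12}^{(k)} + (Z_{21}^{(k)})^\star)$ because, as in Theorem~\ref{PDA Solvent}, $-1\notin\sigma(\mathcal{Z}_k^\star - \lambda\mathcal{Z}_k)$ is inherited from the initial pencil via Theorem~\ref{doublingtransthm}. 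Once well-definedness is secured, the rate-$1/2$ estimate is a routine consequence of the block-diagonal error representation, so I would state that the remaining computations are entirely parallel to~\cite{Li2011}[Theorem~3.1] and omit them, as the paper already signals.
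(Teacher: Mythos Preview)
Your proposal is aligned with the paper: the paper gives no proof of this theorem at all, but simply records that Lemma~\ref{Lem} supplies the needed eigenstructure (the eigenvalue $1$ of $\mathcal{Z}^\top-\lambda\mathcal{Z}$ is semisimple, with two $1\times1$ Jordan blocks) and then defers the entire convergence analysis to \cite{Li2011}, Theorem~3.1. Your sketch --- adapt the argument of Theorem~\ref{PDA Solvent}, isolate the $\lambda=1$ block via Lemma~\ref{Lem}, and hand the rate-$1/2$ bookkeeping to \cite{Li2011} --- is therefore strictly more detailed than what the paper offers, and your closing deferral matches the paper exactly.

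One caution on the heuristic part of your sketch: the sentence ``the $\lambda=1$ block, after the cancellation in the doubling recursion, contributes a term that halves at each step'' is not quite the right picture. Your $T_1=[1]$ captures only one of the two copies of the eigenvalue $1$ in $\sigma(\mathcal{Z}^\top-\lambda\mathcal{Z})=\sigma(A^\top-\lambda B)\cup\sigma(B^\top-\lambda A)$; since $1$ is self-reciprocal it appears once in each factor, and the content of Lemma~\ref{Lem} is precisely that these two copies are semisimple. The rate $1/2$ in \cite{Li2011} comes from analyzing the doubling map on that two-dimensional invariant piece, not from a one-dimensional block that is literally fixed (a fixed block would not decay at all, contradicting $Z_{21}^{(k)}\to 0$). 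This does not affect your overall strategy, since you correctly defer the computation, but if you were to write the argument out you would need to track the $2\times2$ structure rather than a scalar $T_1$.
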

%

\subsection{A numerical example}
In this subsection, we use a numerical example to illustrate the efficiency of Algorithm~1 with the assumption of Theorem~\ref{PDA Solvent} and to demonstrate the numerical behavior of Algorithm~1 in the critical case. All computations were performed in MATLAB/version 2011 on a PC with an Intel Core i7-4770 3.40 GHZ processor and 32 GB main memory. For evaluating the performance, we define the error (ERR), relative error (RERR) and relative normalized residual (RES) as follows
\begin{eqnarray*}
\mbox{ERR} &\equiv& \| Z_{21}^{(k)}\|_F,\,\,\mbox{RERR}\equiv \frac{\| X_k-X\|_F}{\|X\|_F}, \\
\mbox{RES}&\equiv& \frac{\| AX_k+X_k^\star B-C\|_F}{\|A\|_F\|X_k\|_F+\|B\|_F\|X_k\|_F+\|C\|_F},
\end{eqnarray*}
respectively, where $X$ is the (almost) stabilizing solution of \eqref{CTS}. All iterations are terminated whenever the errors or relative errors or the relative normalized residual residuals are less than $n^2\mathbf{u}$, where $\mathbf{u}=2^{-52}\cong 2.22e-16$ is the machine zero.
\begin{example}~\label{ex1}
Let  $\widehat{A}^\top, \widehat{B} \in \mathbb{R}^{n \times n}$ be two real lower-triangular matrices with given diagonal elements (specified by $a,b \in \mathbb{R}^n$) and random strictly lower-triangular elements. They are then reshuffled by the orthogonal matrices $Q,Z \in \mathbb{R}^{n \times n}$ to form $(A,B) = (Q \widehat{A} Z, Q \widehat{B} Z)$, that is, in \texttt{MATLAB} commands, we define
\begin{eqnarray*}
\widehat{A} &=&triu(randn(n),-1)+diag(a), \,\,
\widehat{B}=tril(randn(n),-1)+diag(b), \\
C&=&AX+X^\top B,
\end{eqnarray*}
where
\[a=[temp_1.*temp_2,1-\epsilon] \mbox{ and } b=[temp_1;1]\]
with
\begin{align*}
temp_1=rand(n-1,1),\,\,temp_2=rand(n-1,1).
\end{align*}
where $X=randn(n)$ is given as the (almost) stabilizing solution of \eqref{CTS} and $0\leq \epsilon <1$. The setup of $a$ and $b$ guarantee that $\sigma(A^T-\lambda B)$ lies within the unit circle if $\epsilon\neq 0$, and ``1'' will be a simple eigenvalue of $\sigma(A^T-\lambda B)$ if $\epsilon=0$. Let $n=10$, Table~\ref{table1} contains the iteration numbers (refer as ``ITs''), ERR, RERR and RES for various numbers of $\epsilon$. We see that Algorithm~1 quadratically converges to the stabilizing solution $X$ if $\epsilon=1e-1,1e-2$, and all measurements for errors almost have the same accuracy for every tests. Since the limit of the sequence $\{Z_{12}^k\}$ is
$-A^\top + B$, the condition of $\{Z_{12}^k\}$ becomes ill-conditioning as $\epsilon$ approaches zero.
Note that the sequence $\{X_k\}$ generated by Algorithm~1 converges well to the stabilizing solution before the sequence
 $\{Z_{12}^k\}$ tends to a singular matrix. However, the number of iterations required in the computation dramatically increases, once $\epsilon$ approaches zero and finally loses the property of quadratic convergence. Also, it can be observed that forward errors of the almost stable eigenvalues are approximately equal to $\sqrt{\mathbf{u}}$. Similar phenomena can also be seen in~\cite{Li2011} for the study of the palindromic generalized eigenvalue problem.

%
%
%
%
%

\begin{table}[htbn]
\begin{center}
\begin{tabular}[c]{c|cccc} & &  \\[-2.4ex]
 $\epsilon$    & \multicolumn{1}{c}{ITs}  &
       \multicolumn{1}{c}{ERR} &
     \multicolumn{1}{c}{RERR} & \multicolumn{1}{c}{RES}
                                      \\
     \hline  & & & \\[-2.4ex]
      1e-1  & 5.00e+00 &    9.2527e-17&3.9015e-18 & 8.1211e-16 \\
      1e-2  & 7.00e+00 &   9.8965e-15& 5.8186e-15 & 8.0061e-16 \\
      1e-4  & 1.60e+01 &   1.7175e-13& 1.1288e-13  &  7.2224e-14 \\
      1e-8  & 2.40e+01 &   2.8457e-10& 1.3213e-10 & 1.0476e-10 \\
      0  & 3.80e+01 &   6.6106e-8&  3.9109e-8 & 3.4419e-8 \\
\end{tabular}
\caption{Results for Example~\ref{ex1}} \label{table1}
\end{center}
\end{table}

\end{example}
\section{Conclusion}
One common procedure to solve the Sylvester equations is by means of the Schur decomposition. In this paper, we present the invariant subspace method and, more generally, the deflating subspace method to solve the Sylvester equations. Our methods are based on the analysis of the eigeninformation of two square matrices defined in Section 3 and 4. We carry out a thorough discussion to address the various eigeninformation encountered in the subspace methods. These ideas can then be implemented into a doubling algorithm for solving the (almost) stabilizing solution of \eqref{CTS} in Section~5. On the other hand, it follows from Theorem~\ref{solvable} that~\eqref{CTS} still has an unique solution, even if some eigenvalues of $A-\lambda B^\star$ lie beyond the unit circle. How to apply Algorithm~1 for solving this problem is under investigation and will be reported elsewhere.

\section*{Acknowledgement}
The authors wish to thank editor and two anonymous referees for many interesting and valuable suggestions on the manuscript.


\end{document}